\theoremstyle{plain}
\newtheorem{theorem}{Theorem}[section]
\newtheorem{definition}[theorem]{Definition}
\newtheorem{assumption}[theorem]{Assumption}
\newtheorem{lemma}[theorem]{Lemma}
\newtheorem{corollary}[theorem]{Corollary}
\newtheorem{proposition}[theorem]{Proposition}
\theoremstyle{remark}
\newtheorem{remark}[theorem]{Remark}
\newtheorem*{notation}{Notation}
\newtheorem{example}[theorem]{Example}
\def\Tend#1#2{\mathop{\longrightarrow}\limits_{#1\rightarrow#2}}
\font\tenms=msbm10
\font\sevenms=msbm7
\font\fivems=msbm5
\newcommand{\beq}{\begin{eqnarray}}
\newcommand{\eeq}{\end{eqnarray}}
\newcommand{\bq}{\begin{equation}}
\newcommand{\eq}{\end{equation}}
\newcommand{\beqn}{\begin{eqnarray*}}
\newcommand{\eeqn}{\end{eqnarray*}}
\def\DD{\mathop{\bf D\kern 0pt}\nolimits}
\def\SS{\mathop{\bf S\kern 0pt}\nolimits}
\def\ZZ{\mathop{\bf Z\kern 0pt}\nolimits}
\def\TT{\mathop{\bf T\kern 0pt}\nolimits}
\def\virgp{\raise 2pt\hbox{,}}
\def\cdotpv{\raise 1pt\hbox{ ;}}
\def\eps{\varepsilon}
\def\beq{\begin{equation}}
\def\eeq{\end{equation}}
\def\cdotv{\raise 2pt\hbox{,}}
\def\C{{\mathbf C}}
\def\R{{\mathbf R}}
\def\N{{\mathbf N}}
\def\Tc{{\mathcal T}}
\def\Sch{{\mathcal S}}
\def\F{\mathcal F}
\def\O{\mathcal O}
\def\virgp{\raise 2pt\hbox{,}}
\def\({\left(}
\def\){\right)}
\def\<{\left\langle}
\def\>{\right\rangle}
\def\le{\leqslant}
\def\ge{\geqslant}
\def\Tend#1#2{\mathop{\longrightarrow}\limits_{#1\rightarrow#2}}
\def\d{{\partial}}
\def\l{\lambda}
\numberwithin{equation}{section}
\begin{document}

\title[A Nonlinear adiabatic Theorem for Coherent States]{A Nonlinear
  Adiabatic Theorem for Coherent States} 
\author[R. Carles]{R\'emi Carles}
\address[R. Carles]{Univ. Montpellier~2\\Math\'ematiques
\\CC~051\\F-34095 Montpellier}
\address{ CNRS, UMR 5149\\  F-34095 Montpellier\\ France}
\email{Remi.Carles@math.cnrs.fr}
\author[C. Fermanian]{Clotilde~Fermanian-Kammerer}
\address[C. Fermanian]{LAMA UMR CNRS 8050,
Universit\'e Paris EST\\
61, avenue du G\'en\'eral de Gaulle\\
94010 Cr\'eteil Cedex\\ France}
\email{Clotilde.Fermanian@univ-paris12.fr}
\begin{abstract}
  We consider the propagation of wave packets for a one-dimensional
  nonlinear Schr\"odinger equation with a matrix-valued potential, in
  the semi-classical  limit. For an initial coherent state polarized
  along some eigenvector, we prove that the nonlinear evolution
  preserves the separation of modes, in a scaling such that
  nonlinear effects are critical (the envelope equation is
  nonlinear). The proof relies on a fine geometric analysis of the
  role of spectral projectors, which is compatible with
  the treatment of nonlinearities.  We also prove a nonlinear superposition principle for these adiabatic wave packets.
\end{abstract}
\thanks{This work was supported by the French ANR project
  R.A.S. (ANR-08-JCJC-0124-01)}  
\maketitle

\section{Introduction}
\label{sec:intro}
We consider the semi-classical limit $\eps\to 0$ for the nonlinear
Schr\"odinger equation 
\begin{equation}\label{eq:NLS0}
 \left\{
\begin{aligned}
     i\eps\d_t \psi^\eps +\frac{\eps^2}{2}\d_x^2 \psi^\eps&=V(x)\psi^\eps
     + \Lambda |\psi^\eps |_{\C^N}^{2} \psi^\eps, \quad
(t,x)\in
    {\R}\times {\R}, \\
     \psi^\eps_{\mid t=0} &= \psi^\eps_{0}
  \end{aligned}
\right.
\end{equation}
where  $\Lambda\in\R$.   The data $\psi^\eps_0$ and the solution
$\psi^\eps(t)$ are vectors of $\C^N$, $N\ge  1$.  
The quantity $|\psi^\eps|_{\C^N}^2$ denotes the square of the
Hermitian norm in $\C^N$ of the vector~$\psi^\eps$. Finally, the 
potential $V$ is smooth and valued in the set of $N$ by $N$ Hermitian matrices.
Such systems appear in the modelling of Bose-Einstein condensate
(see~\cite{Bao} and references therein).

\smallbreak

\begin{definition}\label{def:sousquad}
We say that a function $f$ is \emph{at most quadratic} if  $f\in{\mathcal
  C}^\infty(\R)$ and for all  
$ k\ge 2,\;\;f^{(k)}\in L^\infty(\R).$
\end{definition}

We make the following assumptions on the potential $V$:

 \begin{assumption}\label{assumption}
$(1)$ We have $V(x)=D(x)+W(x)$ with $D,W\in{\mathcal
  C}^\infty(\R,\R^{N\times N})$, $D$ diagonal with at most quadratic
coefficients, and $W$  symmetric  and bounded as well as its
derivatives, $W\in W^{\infty,\infty}(\R)$.\\ 
$(2)$ The matrix $V$ has $P$ distinct, at most quadratic,  eigenvalues
$\lambda_1,\dots,\lambda_P$  and  
\begin{equation}\label{gapcondition}
\exists c_0, n_0\in\R^+,\;\;\forall j\not=k,\;\;\forall x\in\R,\;\;
\left|\lambda_j(x)-\lambda_k(x)\right|\ge\,c_0 \< x\>^{-n_0}. 
\end{equation}
 \end{assumption} 

$ $

Under these assumptions (the first point suffices), we can prove
global existence of the solution $\psi^\eps$ for fixed $\eps>0$: 

 \begin{lemma}\label{lem:existpsi}
 If $V$ satisfies Assumption~\ref{assumption} and $\psi^\eps_0\in
 L^2(\R)$, there exists a unique, global, solution to~\eqref{eq:NLS0} 
$$\psi^\eps\in {\mathcal C}\left(\R; L^2(\R)\right)\cap L^8_{\rm
  loc}\(\R;L^4(\R)\).$$ 
The $L^2$-norm of $\psi^\eps$ does not depend on time:
$\|\psi^\eps(t)\|_{L^2(\R)}=\|\psi^\eps_0\|_{L^2(\R)},\;\;\forall
t\in\R.$ 
 \end{lemma}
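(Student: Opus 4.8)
The plan is to treat \eqref{eq:NLS0} as a semilinear Schr\"odinger equation with a (matrix-valued, time-independent) potential that is a bounded perturbation of an at most quadratic one, and to exploit the fact that the cubic nonlinearity in dimension one is $L^2$-subcritical. First I would set up the linear propagator $U^\eps(t)$ generated by $-\frac{\eps^2}{2}\d_x^2+V(x)$: since $D$ has at most quadratic coefficients and $W\in W^{\infty,\infty}$, the operator $-\frac{\eps^2}{2}\d_x^2+V(x)$ is essentially self-adjoint on $L^2(\R;\C^N)$ (e.g.\ by Faris--Lavine or Kato-type arguments), so $U^\eps(t)=e^{-i t(-\frac{\eps^2}{2}\d_x^2+V)/\eps}$ is a well-defined unitary group on $L^2$. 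Next I would record the dispersive estimates: writing $V=\frac12 D_\infty x^2 + (\text{lower order}) + W$ where $D_\infty$ collects the quadratic parts, the standard Mehler-type / Fujiwara parametrix construction (valid for at most quadratic potentials) gives, for $|t|$ small (depending on $\eps$ and on the potential, but uniform on compact time intervals), the dispersion bound $\|U^\eps(t)f\|_{L^\infty}\lesssim |t|^{-1/2}\|f\|_{L^1}$ up to harmless constants; the bounded perturbation $W$ is absorbed by a Duhamel/Gronwall argument. Interpolating with the $L^2$ conservation yields the full family of Strichartz estimates on any finite time interval, in particular the homogeneous bound $\|U^\eps(\cdot)f\|_{L^8_t L^4_x(I)}\lesssim_I \|f\|_{L^2}$ and the inhomogeneous counterpart with the dual exponents $L^{8/7}_tL^{4/3}_x$.

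With Strichartz in hand, the second step is a standard fixed-point argument. The pair $(8,4)$ is admissible in dimension one ($\frac2 8+\frac1 4=\frac12$), and for the cubic term one has, by H\"older in space and time, $\| |\psi|^2\psi\|_{L^{8/7}_tL^{4/3}_x(I)}\lesssim \|\psi\|_{L^8_tL^4_x(I)}^2\,\|\psi\|_{L^\infty_tL^2_x(I)}$ — here the key point is that the exponents close because the nonlinearity is $L^2$-subcritical in 1D. Hence the Duhamel map
\[
\Phi(\psi)(t)=U^\eps(t)\psi^\eps_0 - \frac{i\Lambda}{\eps}\int_0^t U^\eps(t-s)\big(|\psi(s)|_{\C^N}^2\psi(s)\big)\,ds
\]
is a contraction on a ball of $X_T:=\mathcal C([-T,T];L^2)\cap L^8_{\rm loc}([-T,T];L^4)$ for $T$ small enough (depending on $\|\psi^\eps_0\|_{L^2}$ and on $\eps$), giving a unique local solution; uniqueness in the full space follows from the same contraction estimate applied to a difference of two solutions on a short interval.

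The third step is to upgrade this to a global solution, and here the mass conservation identity does the work: multiplying \eqref{eq:NLS0} by $\overline{\psi^\eps}$, integrating over $\R$, and taking imaginary parts kills both the Laplacian term (after integration by parts) and the nonlinear term (which is real after pairing, since $\Lambda\in\R$ and $|\psi^\eps|_{\C^N}^2$ is scalar real), yielding $\frac{d}{dt}\|\psi^\eps(t)\|_{L^2}^2=0$; one justifies this rigorously by the usual regularization/limiting argument on the Duhamel formula. Since the local existence time depends only on the conserved $L^2$-norm, the solution extends to all of $\R$ by iteration, and the $L^8_{\rm loc}L^4$ bound propagates on each finite interval. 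The main obstacle I anticipate is not the nonlinear analysis — which is routine once Strichartz is available — but establishing the dispersive/Strichartz estimates for $U^\eps$ with a matrix-valued, merely at-most-quadratic diagonal part plus a bounded off-diagonal part; one must check that the parametrix construction tolerates the matrix structure (it does, since $D$ is diagonal so the quadratic part decouples mode by mode, and $W$ is treated perturbatively) and keep track that all constants, while possibly $\eps$-dependent, are finite for each fixed $\eps>0$, which is all that Lemma~\ref{lem:existpsi} claims.
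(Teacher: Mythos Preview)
Your proposal is essentially the paper's own argument: Strichartz estimates coming from Fujiwara's parametrix for an at most quadratic potential, a contraction on $\mathcal C_tL^2\cap L^8_tL^4$ for the $L^2$-subcritical cubic nonlinearity, and globalization by conservation of mass. The only structural difference is that the paper takes $U(t)$ to be the propagator of $-\tfrac{\eps^2}{2}\d_x^2+D(x)$ (diagonal, so Fujiwara applies componentwise) and treats $W\psi$ as an \emph{additional source term} in the Duhamel map, estimated by $\|W\psi\|_{L^1_TL^2}\le CT\|\psi\|_{L^\infty_TL^2}$; you instead absorb $W$ into the propagator first and then run the fixed point with only the nonlinearity as source. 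These are equivalent: your ``Duhamel/Gronwall'' step to obtain Strichartz for the full matrix propagator is exactly the paper's source-term estimate, unfolded one layer.

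One correction: your H\"older bound $\||\psi|^2\psi\|_{L^{8/7}_tL^{4/3}_x}\lesssim \|\psi\|_{L^8_tL^4_x}^2\|\psi\|_{L^\infty_tL^2_x}$ does not close in space, since $\tfrac{2}{4}+\tfrac{1}{2}=1\neq\tfrac{3}{4}$. The estimate that does work (and that the paper uses) is
\[
\||\psi|^2\psi\|_{L^{8/7}_tL^{4/3}_x}\le \|\psi\|_{L^{8/3}_tL^4_x}^2\,\|\psi\|_{L^8_tL^4_x}\le T^{1/2}\|\psi\|_{L^8_tL^4_x}^3,
\]
which yields the small factor of $T$ needed for the contraction. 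This is a bookkeeping slip, not a gap in the strategy.
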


The proof of this lemma is sketched in Appendix~\ref{sec:appA}.

In this nonlinear setting, the size of the initial data is crucial. 
As in \cite{CF-p}, we choose to consider initial data of order $1$ (in
$L^2$), and to introduce a dependence upon $\eps$ in the coupling
constant (note that the nonlinearity is homogeneous). This leads to
the equation   
$$  i\eps\d_t \psi^\eps +\frac{\eps^2}{2}\d_x^2 \psi^\eps
     =V(x)\psi^\eps+ \Lambda\eps^{2\beta} |\psi^\eps |_{\C^N}^{2} \psi^\eps, $$
     and we choose the exponent $\beta=3/4$, which is critical
     for the type of initial data we want to consider (coherent state)
     when the  potential $V$
     is \emph{scalar} (see \cite{CF-p}). 
  We are left with the nonlinear semi-classical Schr\"odinger equation 
     \begin{equation}\label{eq:NLS}
     i\eps\d_t \psi^\eps +\frac{\eps^2}{2}\d_x^2 \psi^\eps
     =V(x)\psi^\eps+ \Lambda\eps^{3/2} |\psi^\eps |_{\C^N}^{2}
     \psi^\eps \quad ; \quad 
\psi^\eps_{\mid t=0} = \psi^\eps_{0}.
\end{equation}
We focus on initial data which are perturbation of wave packets 
\begin{equation}\label{data}
\psi^\eps_0(x)= \eps^{-1/4} e^{i\xi_0
    (x-x_0)/\eps}a
\left(\frac{x-x_0}{\sqrt\eps}\right) \chi(x)+r_0^\eps(x),
\end{equation}
where the initial error satisfies
\begin{equation}
  \label{eq:erreur-init}
  \|r_0^\eps\|_{L^2(\R)} +  \|x r_0^\eps\|_{L^2(\R)} +
  \|\eps \d_x r_0^\eps\|_{L^2(\R)}=  \O(\eps^\kappa)
  \text{ for some }\kappa>\frac{1}{4}.
\end{equation}
The
profile $a$ belongs to the Schwartz class, $a\in{\mathcal S}(\R;\C)$,
and the initial datum is polarized along an eigenvector 
$\chi(x)\in{\mathcal C} ^\infty(\R;\C^N)$:
$$V(x)\chi(x)=\lambda_1(x) \chi(x),\quad \text{ with }| \chi(x)|_{\C^N}=1.$$ 
Note that $\l_1$ is simply a notation for \emph{some} eigenvalue, up
to a renumbering of eigenvalues. The $L^2$-norm of
$\psi_0^\eps$ is independent of $\eps$, $\|\psi_0^\eps\|_{L^2(\R)}=
\|a\|_{L^2(\R)}$. As pointed out above, this is equivalent to
considering \eqref{eq:NLS0} with initial data of the same form
\eqref{data}, but of order $\eps^{3/4}$ in $L^2(\R)$. 
The evolution of such data when $a$ is a Gaussian has been
extensively studied by G.~Hagedorn on the one hand,  and  by
G.~Hagedorn and A.~Joye on the other hand, in the linear context
$\Lambda=0$ (see~\cite{Hag94,HJ98}). 
These data are also particularly interesting for numerics (see
\cite{Lubich} and the references therein). 
\smallbreak

Because of the gap condition,  the matrix $V$ has smooth
eigenvalues and eigenprojectors 
(see~\cite{Kato}). Besides, the gap condition~\eqref{gapcondition}
also implies that we control the growth of the eigenprojectors (see
Lemma~\ref{lem:projectors}). Note however than in dimension~$1$ ($x\in\R$), one can have smooth eigenprojectors without any gap condition. We explain this fact below and  give an example of
projectors that we can consider; we also illustrate why things may be more
complicated in higher dimensions ($d\ge 2$). 

\begin{example}
For $N=2$ and $x\in \R$, consider
\begin{equation}\label{example}
V(x)= (ax^2 +b){\rm Id} +\left(\begin{array}{cc} u(x) & v(x) \\ v(x) &
    -u(x)\end{array}\right), 
\end{equation}
for $a,b\in\R$, and $u$ and $v$ smooth and bounded with bounded
derivatives. Such a potential satisfies Assumption~\ref{assumption}. Its
eigenvalues are the two functions  
$$\lambda^\pm(x)= ax^2+b \pm\sqrt{u(x)^2+v(x)^2}.$$
These functions are clearly smooth outside the set of points $x_0$
such that $u(x_0)^2+v(x_0)^2=0$. Besides, for such points, one can
renumber the modes in order to build smooth eigenvalues. More
precisely, observe first that if $u(x)^2+v(x)^2 =O((x-x_0)^\infty)$
close to $x_0$, the functions $\lambda^\pm$ are smooth close to
$x_0$. Moreover,  if  $u(x)^2+v(x)^2 =(x-x_0)^k f(x)$ with
$f(x_0)\not=0$, necessarily $f(x_0)>0$ and $k=2p$, so we have  
$$\lambda^\pm(x)=ax^2+b \pm|x-x_0|^p\sqrt{f(x)}.$$
For $p$ even these functions are again smooth. However, when $p$ is
odd, they are no longer smooth and we perform a renumbering of the
eigenfunctions, observing that  
$$x\mapsto ax^2+b +(x-x_0)^p\sqrt{f(x)}$$ 
are smooth eigenvalues of $V$ close to $x_0$.
\end{example}
\begin{example}
Resume the above example, with now $x\in \R^d$,
  $d\ge 2$. The smoothness of the eigenvalues  is no longer guaranteed:
suppose $u(x)=x_1$ and $v(x)=x_2$, then the functions $\lambda_\pm$
are not smooth and one cannot find any renumbering  which makes them
smooth. 
\end{example}

\begin{example}
For an example of a potential which
satisfies~\ref{assumption}, we simply choose $V$ as in~\eqref{example} with  
$$ c_u u(x)=c_v v(x)=\< x\>^{-n_0}, \quad c_u^2+c_v^2\not =0.$$ 
\end{example}

\subsection{The ansatz}

We consider
the classical trajectories $\left(x(t),\xi(t)\right)$ solutions to
\begin{equation}\label{eq:traj}
 \dot x(t)=\xi(t),\;\;\dot \xi(t)=-\nabla
 \lambda_1(x(t)),\;\;x(0)=x_0,\;\xi(0)=\xi_0. 
 \end{equation}
Because  $\lambda_1$ is at most quadratic,  the classical
trajectories grow at most exponentially in time (see e.g. \cite{CF-p}):
\begin{equation}\label{expgrowth}
\exists C>0,\;\; |\xi(t)|+|x(t)|\lesssim {\rm e} ^{Ct}.
\end{equation}  
 We denote by~$S$
the action associated with $(x(t),\xi(t))$
\begin{equation}\label{eq:action}
S(t)=\int_0^t \left( \frac{1}{2} |\xi(s)|^2-\lambda_1(x(s))\right)\,ds.
\end{equation}
 We consider the function  $u=u(t,y)$  solution to  
 \begin{equation}\label{eq:profil}
 i\partial_t  u +\frac{1}{2} \d_y^2 u=\frac{1}{2}\lambda''_1\(x(t)\) y^2
u+\Lambda |u|^2u\quad ;\quad 
u(0,y)=a(y),
\end{equation}
 and we denote by 
$\varphi^\eps$  the function associated with~$u,
x,\xi, S$ by:
\begin{equation}\label{eq:phi}
\varphi^\eps(t,x)=\eps^{-1/4} u
\left(t,\frac{x-x(t)}{\sqrt\eps}\right)e^{i\left(S(t)+\xi(t)
    (x-x(t))\right)/\eps}.
\end{equation}
Global existence of $u$ and control of its derivatives and momenta are
proved in~\cite{Ca-p}. More precisely, we have the following result.

\begin{theorem}[From \cite{Ca-p}]\label{theo:profil}
Suppose $a\in{\mathcal S}(\R)$. There exists a  unique, global
solution $u \in {\mathcal C}(\R;L^2(\R))\cap L^8_{\rm
  loc}(\R;L^4(\R))$ to~\eqref{eq:profil}. In addition, for all $k,p\in
\N$, $\<y\>^k\partial_y ^p u \in {\mathcal C}(\R;L^2(\R))$ and 
\begin{equation}\label{boundedmoments}
\forall k,p\in\N,\;\;\exists C>0,\;\;\forall
t\in\R^+,\;\;\|\<y\>^k\partial_y ^p u(t,\cdot)\|_{L^2(\R)}\lesssim
{\rm e}^{Ct}. 
\end{equation}
\end{theorem}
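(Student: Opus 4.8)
The plan is to combine a local well-posedness statement in a weighted Sobolev space with a priori bounds obtained by conjugating the linear flow with first-order operators adapted to the classical trajectory, in the spirit of Hagedorn's analysis of wave packets \cite{Hag94,HJ98}; this is carried out in \cite{Ca-p}, and here I outline the argument. I would work in $\Sigma:=\{f\in H^1(\R):\ yf\in L^2(\R)\}$ with $\|f\|_\Sigma=\|f\|_{L^2}+\|\d_y f\|_{L^2}+\|yf\|_{L^2}$. Since $\lambda_1$ is at most quadratic and the trajectory $x(t)$ is globally defined by \eqref{expgrowth}, the potential $\frac12\lambda_1''(x(t))y^2$ is smooth, real, quadratic in $y$ with coefficient bounded on bounded time intervals; hence the linear propagator $U(t,s)$ of $i\d_t w+\frac12\d_y^2 w=\frac12\lambda_1''(x(t))y^2 w$ is well defined, unitary on $L^2(\R)$, maps $\Sigma$ into $\Sigma$ with norm locally bounded in $(t,s)$, and --- being given by a Mehler-type formula --- obeys the same Strichartz estimates as the free Schr\"odinger group, locally in time. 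Rewriting \eqref{eq:profil} in Duhamel form $u(t)=U(t,0)a-i\Lambda\int_0^t U(t,s)\big(|u|^2u\big)(s)\,ds$ and using that in one space dimension $f\mapsto|f|^2f$ is locally Lipschitz on $\Sigma\hookrightarrow H^1(\R)\hookrightarrow L^\infty(\R)$, a contraction argument produces a unique maximal solution $u\in\mathcal C\big((-T_-,T_+);\Sigma\big)\cap L^8_{\rm loc}\big((-T_-,T_+);L^4(\R)\big)$, with the blow-up alternative: $T_+<\infty$ forces $\|u(t)\|_\Sigma\to+\infty$ as $t\uparrow T_+$ (and similarly at $-T_-$).

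The core is a pair of first-order operators intertwining with $U(t,s)$. Let $(q_j,p_j)_{j=1,2}$ solve the linearized classical system $\dot q=p,\ \dot p=-\lambda_1''(x(t))\,q$, with $(q_1,p_1)(0)=(1,0)$ and $(q_2,p_2)(0)=(0,1)$, and set
\[
A(t)=q_1(t)\d_y-i\,p_1(t)\,y,\qquad B(t)=q_2(t)\d_y-i\,p_2(t)\,y.
\]
A direct computation gives $[\,i\d_t-H_0(t),\,A(t)\,]=[\,i\d_t-H_0(t),\,B(t)\,]=0$ with $H_0(t)=-\frac12\d_y^2+\frac12\lambda_1''(x(t))y^2$, hence the intertwining relations $A(t)U(t,s)=U(t,s)A(s)$ and $B(t)U(t,s)=U(t,s)B(s)$. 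Because $\lambda_1''$ is bounded, the $(q_j,p_j)$ --- hence the coefficients of $A(t),B(t)$ --- grow at most exponentially, and since the Wronskian $q_1p_2-q_2p_1$ is a nonzero constant, $y$ and $\d_y$ are in turn expressed in terms of $A(t),B(t)$ with coefficients $\lesssim{\rm e}^{Ct}$; consequently
\[
\|yu(t)\|_{L^2}+\|\d_y u(t)\|_{L^2}\ \lesssim\ {\rm e}^{Ct}\Big(\|u(t)\|_{L^2}+\|A(t)u(t)\|_{L^2}+\|B(t)u(t)\|_{L^2}\Big),
\]
and conversely, so it is enough to bound the right-hand side.

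Conservation of mass gives $\|u(t)\|_{L^2}=\|a\|_{L^2}$. Applying $A(t)$ to the Duhamel formula and using the intertwining relation, $A(t)u(t)=U(t,0)\big(A(0)a\big)-i\Lambda\int_0^t U(t,s)\,A(s)\big(|u|^2u\big)(s)\,ds$; the Leibniz rule yields $A(s)(|u|^2u)=|u|^2A(s)u+q_1(s)\,u\,\d_y(|u|^2)$, whose last term is $\lesssim{\rm e}^{Cs}\|u\|_{L^\infty_y}^2\|\d_y u\|_{L^2}$, so that the one-dimensional Gagliardo--Nirenberg inequality $\|u\|_{L^\infty_y}^2\lesssim\|u\|_{L^2}\|\d_y u\|_{L^2}$ together with $\|u\|_{L^2}=\|a\|_{L^2}$ turns the Strichartz (or energy) estimate into a Gronwall-type inequality for $\|A(t)u(t)\|_{L^2}^2+\|B(t)u(t)\|_{L^2}^2$. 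The step I expect to be the main obstacle is that this inequality is superlinear, so one must first know the solution is global. That is where the $L^2$-subcriticality of the cubic nonlinearity in dimension one is used: combining conservation of mass, a Gagliardo--Nirenberg bound for the $L^4$-term in the time-dependent energy $E(t)=\frac12\|\d_y u\|_{L^2}^2+\frac12\lambda_1''(x(t))\|yu\|_{L^2}^2+\frac{\Lambda}{2}\|u\|_{L^4(\R)}^4$, whose time derivative equals $\frac12\lambda_1'''(x(t))\,\dot x(t)\,\|yu(t)\|_{L^2}^2$, and the virial identity $\frac{d}{dt}\|yu(t)\|_{L^2}^2=2\,\IM\<yu,\d_y u\>$, one checks that $\|u(t)\|_\Sigma$ remains finite on every bounded interval; hence $T_\pm=+\infty$. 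Running the Gronwall estimate on the global solution then gives the exponential bounds $\|\d_y u(t)\|_{L^2}+\|yu(t)\|_{L^2}\lesssim{\rm e}^{Ct}$.

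Finally, \eqref{boundedmoments} follows by induction on $k+p$: apply $A(t)^mB(t)^n$ with $m+n=k+p$ to \eqref{eq:profil}, use the intertwining relations for the linear part, expand the nonlinear contribution by the Leibniz rule into a finite sum of products $\prod A(t)^{m_i}B(t)^{n_i}u$ with $\sum(m_i+n_i)=m+n$ and at most one factor of top order, estimate the lower-order factors in $L^\infty_y$ by Gagliardo--Nirenberg (hence by the already-established lower-order exponential bounds) and the top-order factor in $L^2$, and close by Gronwall, keeping track of the exponentially growing coefficients of $A(t),B(t)$. The recurring difficulty throughout is the interplay between the superlinear nonlinearity and the time-dependent --- and possibly repulsive, $\lambda_1''(x(t))<0$ --- harmonic potential: the potential has to be removed by conjugation with the classical flow, at the cost of exponentially growing coefficients, and global existence has to be secured from the subcritical structure before the weighted and derivative estimates can be propagated.
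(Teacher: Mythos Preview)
The paper does not prove Theorem~\ref{theo:profil}; it is quoted from \cite{Ca-p}. Your outline captures the correct ingredients --- local theory in $\Sigma$, the Heisenberg operators $A(t),B(t)$ intertwined with the linear propagator, $L^2$-subcriticality for globality --- and is in the spirit of \cite{Ca-p}. There is, however, a genuine gap in the step where you extract the exponential bound.

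You write $A(s)(|u|^2u)=|u|^2A(s)u+q_1(s)\,u\,\d_y(|u|^2)$ and estimate the second term by $e^{Cs}\|u\|_{L^\infty}^2\|\d_y u\|_{L^2}$. Feeding $\|u\|_{L^\infty}^2\lesssim\|a\|_{L^2}\|\d_y u\|_{L^2}$ and $\|\d_y u\|_{L^2}\lesssim e^{Cs}\big(\|Au\|_{L^2}+\|Bu\|_{L^2}\big)$ into either the energy or the Strichartz inequality produces, for $N(t)=\|Au(t)\|_{L^2}+\|Bu(t)\|_{L^2}$, a bound of the form $N(t)\le N(0)+C\int_0^t e^{Cs}N(s)^2\,ds$. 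This is a Riccati inequality, not a Gronwall one, and it does \emph{not} yield an exponential bound; its comparison ODE blows up in finite time. Knowing beforehand that the solution is global in $\Sigma$ does not help: global existence tells you $N(t)<\infty$ for all $t$, but the integral inequality itself gives no rate.

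The fix is twofold. First, do not stop at the Leibniz expansion: the cubic nonlinearity is gauge-invariant, and one has the exact identity
\[
A(s)\big(|u|^2u\big)=2|u|^2\,A(s)u+u^2\,\overline{A(s)u},
\]
with \emph{no} exponentially growing coefficient (the $q_1$ and $p_1$ contributions cancel). Second, use Strichartz rather than the bare energy estimate: on any interval $I$ of length $\tau$,
\[
\|Au\|_{L^\infty_IL^2\cap L^8_IL^4}\le C\|Au(t_0)\|_{L^2}+C\|u\|_{L^{8/3}_IL^4}^2\|Au\|_{L^8_IL^4}.
\]
The $L^2$-subcritical global theory gives a \emph{uniform} bound $\|u\|_{L^8_IL^4}\le C(\|a\|_{L^2})$ on every interval of a fixed size $\tau_0=\tau_0(\|a\|_{L^2})$ (this is where subcriticality truly enters). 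Taking $\tau\le\tau_0$ small enough --- depending only on $\|a\|_{L^2}$, not on $t$ --- the last term is absorbed, yielding $\|Au\|_{L^\infty_IL^2}\le 2C\|Au(t_0)\|_{L^2}$. Iterating over $\O(t/\tau)$ intervals gives $\|Au(t)\|_{L^2}\le (2C)^{t/\tau}\|A(0)a\|_{L^2}=e^{Ct}\|\d_ya\|_{L^2}$, and similarly for $Bu$; the higher-order induction then proceeds as you indicate. Your energy/virial argument for global existence in $\Sigma$ is fine as a first step, but it is the gauge identity plus Strichartz plus the uniform $L^8L^4$ bound that delivers \eqref{boundedmoments}.
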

In  particular, note that $\partial_y^p u(t,\cdot)$ is in $L^\infty$
for all $p\in\N$.  
These results have consequences on $\varphi^\eps$. As far as
the $L^\infty$ norm is concerned, we infer, using \eqref{expgrowth},  
\begin{equation}\label{Linftybound}
\forall p\in\N ,\;\;
\| (\eps\partial_x)^p\varphi^\eps(t)\|_{L^\infty}
\lesssim \eps^{-1/4}{\rm e}^{C_p t}. 
\end{equation}

 We use the  time-dependent eigenvectors constructed in \cite{Hag94}
 (see also \cite{HJ98} and~\cite{Teufel}). To make  the notations precise,
 we denote by $d_j$ the multiplicity of the eigenvalue~$\lambda_j$,
 $1\le j\le P$ 
 (note that $\sum_{1\le j\le P} d_j=N$).

 \begin{proposition} \label{prop:timedepeigen}
 There exists a smooth orthonormal family
 $\left(\chi^{\ell}(t,x)\right)_{1\le\ell\le d_1}$  such that  for
 all $t$,
 $\left(\chi^{\ell}(t,x)\right)_{1\le\ell\le d_1}$ spans the
 eigenspace associated to $\lambda_1$,  
 $\chi^1(0,x)=\chi(x)$ and for $m\in\{1,\cdots, d_1\}$,
 \begin{equation}\label{eq:eigenvector}
\left(\chi^m(t,x),\partial_t \chi^\ell(t,x) + \xi(t)\partial_x \chi
  ^\ell(t,x)\right)_{\C^N}=0. 
\end{equation}
Moreover, 
for $\ell\in\{1,\dots,d_1\}$, $k,p\in\N$, there exists a constant
$C=C(p,k)$ such that  
$$
\left\lvert \partial_t^p \partial_x^k\chi ^\ell
  (t,x)\right\rvert_{\C^N}\le C \, {\rm 
  e}^{C t} \<x\>^{(k+p)(1+n_0)}, $$
where $n_0$ appears in \eqref{gapcondition}. 
\end{proposition}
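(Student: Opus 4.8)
The plan is to reproduce the adiabatic (parallel transport) construction of Hagedorn and Joye, see \cite{Hag94,HJ98,Teufel}, keeping careful track of the way the gap condition \eqref{gapcondition} enters the estimates. Write $P_1(x)$ for the spectral projector of $V(x)$ onto the eigenspace of $\lambda_1(x)$; by the gap condition it is smooth, and by Lemma~\ref{lem:projectors} its $x$-derivatives grow at most polynomially, each differentiation costing a power $\<x\>^{1+n_0}$. First I would fix, at time $0$, a smooth orthonormal frame $(\chi^\ell_0)_{1\le\ell\le d_1}$ of $\mathrm{Ran}\,P_1$ with $\chi^1_0=\chi$ and with polynomially bounded derivatives: set $\chi^1_0=\chi$ and complete it, e.g. by parallel transport in $x$ from $x=0$ inside the orthogonal complement of $\chi$ within $\mathrm{Ran}\,P_1$, that is, by solving a Kato equation $\d_x B=[\d_x Q,Q]B$, $B(0)=\Id$, with $Q$ the corresponding projector; the propagator $B$ is unitary, while $\d_x^kB$ solves a linear differential equation driven by $\d_x^jQ$, $j\le k$, so the derivatives of $(\chi^\ell_0)$ inherit the polynomial bounds of $P_1$.

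Next I would propagate this frame along the classical flow. The vector field $\d_t+\xi(t)\d_x$ has characteristics $t\mapsto x_*+x(t)-x_0$; along such a characteristic put $P(t)=P_1(x_*+x(t)-x_0)$, let the dot denote $d/dt$, set $K(t)=[\dot P(t),P(t)]$ and let $A_{x_*}(t)$ solve $\dot A=KA$, $A(0)=\Id$. Then $K(t)$ is anti-Hermitian, hence $A_{x_*}(t)$ is unitary, and Kato's identity $\dot P=[K,P]$ — which follows from $P^2=P$, whence $P\dot P P=0$ — yields the intertwining $A_{x_*}(t)P(0)A_{x_*}(t)^{-1}=P(t)$. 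I then set $\chi^\ell(t,x):=A_{x_*}(t)\chi^\ell_0(x_*)$ with $x_*=x-x(t)+x_0$. Unitarity of $A_{x_*}$ makes $(\chi^\ell(t,x))_\ell$ orthonormal, the intertwining property makes it a basis of $\mathrm{Ran}\,P_1(x)$, and $\chi^1(0,x)=\chi(x)$. Finally, along a characteristic $(\d_t+\xi\d_x)\chi^\ell=[\xi(t)\,\d_x P_1(x),P_1(x)]\chi^\ell$; since $P_1\chi^m=\chi^m$, $P_1\chi^\ell=\chi^\ell$ and $P_1$ is Hermitian, the two terms $(\chi^m,(\d_x P_1)P_1\chi^\ell)_{\C^N}$ and $(\chi^m,P_1(\d_x P_1)\chi^\ell)_{\C^N}$ both equal $(\chi^m,(\d_x P_1)\chi^\ell)_{\C^N}$, so $(\chi^m,(\d_t+\xi\d_x)\chi^\ell)_{\C^N}=0$, which is \eqref{eq:eigenvector}.

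It remains to establish the quantitative bounds, and this is the delicate part. Smoothness in $(t,x)$ follows from smooth dependence of solutions of linear differential equations on parameters, $x_*=x-x(t)+x_0$ being a smooth function of $(t,x)$. For the estimates I would argue by induction on $k+p$, at each order decomposing $\d_t^p\d_x^k\chi^\ell$ into its components in $\mathrm{Ran}\,P_1(x)$ and in $\mathrm{Ran}\,(\Id-P_1(x))$. Differentiating the identity $P_1(x)\chi^\ell=\chi^\ell$ expresses the component in $\mathrm{Ran}\,(\Id-P_1(x))$ through the $\d_x^jP_1$ and lower-order derivatives of $\chi^\ell$, so it is bounded by $\<x\>^{(k+p)(1+n_0)}$ by Lemma~\ref{lem:projectors}. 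The component in $\mathrm{Ran}\,P_1(x)$ is carried by the connection coefficients $(\chi^m,\d_x\chi^\ell)_{\C^N}$ and their higher-order analogues, which along a characteristic obey differential equations in $t$; after using $P_1\chi^m=\chi^m$, $P_1\chi^\ell=\chi^\ell$, the dangerous contributions to the right-hand sides are of the form $(\chi^m,[\d_x^jP_1,P_1]\chi^\ell)_{\C^N}$, and these vanish identically since $[\d_x^jP_1,P_1]$ has no diagonal block; what survives is expressed through the $\d_x^jP_1$ and lower-order objects. Hence the connection coefficients essentially retain the size they have at $t=0$, up to the factors $e^{Ct}$ generated by $|\xi(t)|+|x(t)|\lesssim e^{Ct}$ from \eqref{expgrowth} (together with the at most quadratic growth of $\lambda_1$). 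Combining the two components and iterating yields $|\d_t^p\d_x^k\chi^\ell(t,x)|_{\C^N}\lesssim e^{Ct}\<x\>^{(k+p)(1+n_0)}$.

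The main obstacle is precisely this induction: one must organize the repeated differentiation of the generator $[\dot P_1,P_1]$ and of the propagators $A_{x_*}$ so that the cancellation of the diagonal blocks is genuinely exploited at every order. A plain Gronwall estimate applied directly to $\d_x^kA_{x_*}$ would only produce a bound growing like $\exp(c\,e^{Ct}\<x\>^{1+n_0})$, which is useless; it is the block-off-diagonal structure of $[\dot P_1,P_1]$, combined with the polynomial control of the $\d_x^jP_1$ furnished by the gap condition \eqref{gapcondition}, that keeps the weights at the stated polynomial level.
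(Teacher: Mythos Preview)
Your construction of the frame by parallel transport along the characteristics of $\partial_t+\xi(t)\partial_x$, using the Kato generator $K=[\dot P_1,P_1]$, and your verification of \eqref{eq:eigenvector} are exactly what the paper does (it writes the equation as $i\partial_t Y=\xi K_j Y$ with $K_j=-i[\Pi_j,\partial_x\Pi_j]$ and sets $\chi^\ell(t,x)=Y(t,x-x(t))$; see the proof of Proposition~\ref{prop:timedepeigenbis}). The extra care you take to produce an initial frame $(\chi^\ell_0)$ with polynomially bounded $x$-derivatives is a detail the paper does not spell out.

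Where you diverge is in the growth estimates, and here you over-engineer the argument. You have already observed that $K$ is anti-Hermitian, so the propagator $\mathcal U(t,s)$ is \emph{unitary} on $\C^N$ for each fixed $z$. The paper simply differentiates $i\partial_t Y=\xi K Y$ a total of $k$ times in $z=x-x(t)$: the top-order term carries the \emph{same} generator $\xi K$, and the remainder is a source $f$ built from $\partial_z^\gamma K$ ($\gamma\ge 1$) against lower-order $\partial_z^{k-\gamma}Y$. Duhamel with the unitary $\mathcal U$ then gives, pointwise in $z$,
\[
|\partial_z^k Y(t,z)|\le |\partial_z^k Y(0,z)|+\int_0^t |f(s,z)|\,ds,
\]
with no exponential-in-$\langle x\rangle$ factor; the polynomial weights come solely from Lemma~\ref{lem:projectors} applied to $\partial_z^\gamma K_j$, and the $e^{Ct}$ from $|\xi(t)|+|x(t)|\lesssim e^{Ct}$. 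Time derivatives are then read off the equation itself. Your ``plain Gronwall'' warning applies only if one discards unitarity and bounds $|v|'\le |\xi K|\,|v|$; the cure is Duhamel with $\mathcal U$, not a block decomposition. The off-diagonal structure of $[\dot P_1,P_1]$ that you emphasize is therefore not needed for the estimates---anti-Hermiticity alone suffices---and the induction via connection coefficients you sketch, while in principle workable, is considerably heavier than the paper's direct route and (as you yourself note) is the part you have not fully carried out.
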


Note that equation~(\ref{eq:eigenvector}) for $m=\ell$ is true as soon
as the eigenvector $\chi^\ell$ is normalized and real-valued. 

Equation~\eqref{eq:eigenvector} is often referred to as 
\emph{parallel transport}.   These time-dependent eigenvectors are
commonly used in adiabatic theory  and are connected with the Berry
phase (see~\cite{Teufel}).  
Their construction is recalled in Section~\ref{sec:eigenvector}, where
the control of their growth is also established.

  \begin{notation}
In the case of a single coherent state, we complete the family
$\left(\chi^{\ell}(t,x)\right)_{1\le\ell\le 
  d_1}$ as an orthonormal basis $\(\chi^\ell_j\)_{{1\le j\le
    P}\atop{1\le \ell\le d_j}}$ of $\C^N$ as follows: 
\begin{itemize}
\item $\chi_1^\ell=\chi^\ell$,
\item For $j\ge 2$ and $1\le \ell\le d_j$,
  $\chi^\ell_j=\chi^\ell_j(x)$ does not depend on time, 
\item  For  $j\ge 2$,  $(\chi^\ell_j)_{1\le j\le d_j}$ spans the
  eigenspace associated to $\lambda_j$. 
\end{itemize}
\end{notation} 
 
 \subsection{The results}
 
 We  prove that there is adiabatic decoupling for the solution
 of~(\ref{eq:NLS}) with initial data which are coherent states of the
 form~(\ref{data}): the solution keeps the same form and remains in
 the same eigenspace.  
 
 \begin{theorem} \label{theo:matrix} Let $a\in \Sch(\R)$ and $r_0^\eps$
  satisfying \eqref{eq:erreur-init}. Under
  Assumption~\ref{assumption}, consider $\psi^\eps$ solution to
  the Cauchy problem \eqref{eq:NLS}--\eqref{data}, and the approximate
  solution $\varphi^\eps$ given by \eqref{eq:phi}. There exists a constant
  $C>0$ such that the function
$$w^\eps(t,x)=\psi^\eps(t,x)-\varphi^\eps(t,x) \chi^1(t,x),$$
where $\chi^1$ is given by Proposition~\ref{prop:timedepeigen}, satisfies 
$$\sup_{|t|\le C{\log}{\log}\frac{1}{\eps}}
\(\|w^\eps(t)\|_{L^2}+\|x w^\eps(t)\|_{L^2}+ \|\eps\d_x
w^\eps(t)\|_{L^2}\)\Tend\eps 0 0 .$$ 
\end{theorem}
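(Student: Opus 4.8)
\emph{Plan of proof.} Write $L^\eps:=i\eps\d_t+\frac{\eps^2}{2}\d_x^2-V(x)$, so \eqref{eq:NLS} reads $L^\eps\psi^\eps=\Lambda\eps^{3/2}|\psi^\eps|_{\C^N}^2\psi^\eps$, and set $w^\eps=\psi^\eps-\varphi^\eps\chi^1$. The first step is to compute $L^\eps w^\eps$. By the standard wave-packet computation (see \cite{Ca-p,CF-p}), $\varphi^\eps$ solves the \emph{scalar} equation $i\eps\d_t\varphi^\eps+\frac{\eps^2}{2}\d_x^2\varphi^\eps=\lambda_1^{(2)}(t,x)\varphi^\eps+\Lambda\eps^{3/2}|\varphi^\eps|^2\varphi^\eps$, where $\lambda_1^{(2)}(t,\cdot)$ is the order-two Taylor polynomial of $\lambda_1$ at $x(t)$. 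Combining this with $V\chi^1=\lambda_1\chi^1$, expanding $\d_t$ and $\d_x$ across the product $\varphi^\eps\chi^1$, and using $\eps\d_x\varphi^\eps=i\xi(t)\varphi^\eps+\O(\eps^{1/2})$ in $L^2$, one gets
\[
L^\eps w^\eps=\Lambda\eps^{3/2}\big(|\psi^\eps|_{\C^N}^2\psi^\eps-|\varphi^\eps|^2\varphi^\eps\chi^1\big)-i\eps\,\varphi^\eps\big(\d_t\chi^1+\xi(t)\d_x\chi^1\big)+\rho^\eps,
\]
where $\rho^\eps$ gathers the cubic Taylor remainder $(\lambda_1^{(2)}-\lambda_1)\varphi^\eps$, the term $\frac{\eps^2}{2}\varphi^\eps\d_x^2\chi^1$, and the subprincipal part of $\eps^2\d_x\varphi^\eps\,\d_x\chi^1$. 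Using the moment and derivative bounds \eqref{boundedmoments} for $u$, the exponential growth \eqref{expgrowth} of $(x(t),\xi(t))$, and the polynomial-in-$x$ bounds of Proposition~\ref{prop:timedepeigen}, one checks $\|\rho^\eps(t)\|_{L^2}+\|x\rho^\eps(t)\|_{L^2}+\|\eps\d_x\rho^\eps(t)\|_{L^2}\lesssim\eps^{3/2}e^{Ct}$.

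The term $-i\eps\,\varphi^\eps(\d_t\chi^1+\xi\d_x\chi^1)$ is only $\O(\eps e^{Ct})$, which after the division by $\eps$ inherent to the energy estimate is not integrable as $\eps\to0$; here the geometry enters. By the parallel transport relation \eqref{eq:eigenvector}, $\d_t\chi^1+\xi\d_x\chi^1$ is orthogonal to the eigenspace of $\lambda_1$, hence expands as $\d_t\chi^1+\xi\d_x\chi^1=\sum_{j\ge2,\ell}c_j^\ell(t,x)\,\chi_j^\ell$ with the $\chi_j^\ell$ ($j\ge2$) time-independent, as in the Notation, and with $c_j^\ell$ smooth of at most polynomial growth in $x$ and exponential in $t$. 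Introduce the corrector
\[
R^\eps(t,x)=-\,i\eps\sum_{j\ge2,\ell}\frac{c_j^\ell(t,x)}{\lambda_1(x)-\lambda_j(x)}\,\varphi^\eps(t,x)\,\chi_j^\ell(x).
\]
Since $V\chi_j^\ell=\lambda_j\chi_j^\ell$ and $\varphi^\eps$ solves the scalar equation above, $L^\eps(\varphi^\eps\chi_j^\ell)=(\lambda_1-\lambda_j)\varphi^\eps\chi_j^\ell$ modulo lower-order terms, so the weight $(\lambda_1-\lambda_j)^{-1}$ is chosen precisely so that $L^\eps R^\eps$ cancels $-i\eps\varphi^\eps(\d_t\chi^1+\xi\d_x\chi^1)$ up to an $\O(\eps^2 e^{Ct})$ error in the three norms above. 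The gap condition \eqref{gapcondition} gives $|\lambda_1-\lambda_j|^{-1}\lesssim\langle x\rangle^{n_0}$, and every such polynomial weight is absorbed by the Schwartz concentration of $\varphi^\eps$ around $x(t)$; in particular $\|R^\eps(t)\|_{L^2}+\|xR^\eps(t)\|_{L^2}+\|\eps\d_x R^\eps(t)\|_{L^2}\lesssim\eps e^{Ct}$, so $R^\eps$ is negligible for the conclusion on the time scale of the theorem, and, being $\O(\eps)$, it does not disturb the nonlinear estimates below — this is the sense in which the geometric reduction is compatible with the nonlinearity.

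Set $\tilde w^\eps:=w^\eps-R^\eps=\psi^\eps-\varphi^\eps\chi^1-R^\eps$, so that $L^\eps\tilde w^\eps=\Lambda\eps^{3/2}(|\psi^\eps|_{\C^N}^2\psi^\eps-|\varphi^\eps|^2\varphi^\eps\chi^1)+\tilde\rho^\eps$ with $\|\tilde\rho^\eps(t)\|_{L^2}+\|x\tilde\rho^\eps(t)\|_{L^2}+\|\eps\d_x\tilde\rho^\eps(t)\|_{L^2}\lesssim\eps^{3/2}e^{Ct}$. Now run coupled weighted $L^2$ estimates for $\mathcal N^\eps(t):=\|\tilde w^\eps(t)\|_{L^2}+\|x\tilde w^\eps(t)\|_{L^2}+\|\eps\d_x\tilde w^\eps(t)\|_{L^2}$: the self-adjoint operator $-\frac{\eps^2}{2}\d_x^2+V$ contributes nothing to $\frac{d}{dt}\|\cdot\|_{L^2}^2$; the commutators $[\frac{\eps^2}{2}\d_x^2,x]=\eps^2\d_x$ and $[\eps\d_x,V]=\eps V'$ couple the three quantities with $\eps$-independent coefficients (here $V$ at most quadratic keeps $\eps V'$ controlled by $\|x\tilde w^\eps\|+\|\tilde w^\eps\|$); the source $\tilde\rho^\eps$ contributes $\O(\eps^{1/2}e^{Ct})$ after division by $\eps$; and the nonlinear difference, expanded around $\varphi^\eps\chi^1$ with $|\varphi^\eps\chi^1|_{\C^N}=|\varphi^\eps|$ and estimated by means of \eqref{Linftybound}, produces a term $\lesssim\eps^{-1}\eps^{3/2}\|\varphi^\eps\|_{L^\infty}^2\mathcal N^\eps\lesssim e^{Ct}\mathcal N^\eps$, plus terms quadratic and cubic in $\tilde w^\eps$ which, via the Gagliardo--Nirenberg bound $\|\tilde w^\eps\|_{L^\infty}\lesssim\eps^{-1/2}\|\tilde w^\eps\|_{L^2}^{1/2}\|\eps\d_x\tilde w^\eps\|_{L^2}^{1/2}$ and a bootstrap hypothesis $\mathcal N^\eps\ll\eps^{1/4}$, are absorbed into the preceding ones. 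This yields $\frac{d}{dt}\mathcal N^\eps\lesssim\eps^{1/2}e^{Ct}+e^{Ct}\mathcal N^\eps$ with $\mathcal N^\eps(0)=\O(\eps^\kappa)$, $\kappa>\frac14$. Gr\"onwall's lemma, the coefficient being of size $e^{Ct}$, gives the double exponential $\mathcal N^\eps(t)\lesssim\exp(c_1 e^{Ct})\,(\eps^\kappa+\eps^{1/2})$; restricting to $|t|\le C\log\log\frac1\eps$ with $C$ small enough keeps $\exp(c_1 e^{Ct})=\eps^{-o(1)}$, whence $\mathcal N^\eps(t)\to0$ uniformly on that interval — which in particular recovers the bootstrap hypothesis, since $\kappa>\frac14$. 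Finally $w^\eps=\tilde w^\eps+R^\eps$ and $\|R^\eps\|_{L^2}+\|xR^\eps\|_{L^2}+\|\eps\d_x R^\eps\|_{L^2}=\O(\eps^{1-o(1)})$ on this interval, which gives the claim.

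The main obstacle is the construction at the second step: one must simultaneously exploit parallel transport \eqref{eq:eigenvector} to remove the component of the $\O(\eps)$ source that is resonant in frequency but transverse in polarization, use the gap condition \eqref{gapcondition} to invert $\lambda_1-\lambda_j$ without losing control of the $x$-weights, and keep the corrector $R^\eps$ of size $\O(\eps)$ so that it does not interact with the nonlinearity — which, sitting at the critical scale $\eps^{3/2}$, already forces the $\log\log$ time restriction through the double-exponential Gr\"onwall inequality. Verifying that the coupled weighted energy estimates and the bootstrap close with $\kappa>\frac14$ is the quantitative core of the argument.
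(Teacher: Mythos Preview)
Your argument is correct, and the overall architecture — derive the equation for $w^\eps$, peel off the $\O(\eps)$ source along the $j\ge 2$ modes by a corrector, then run a weighted energy estimate with a Gagliardo--Nirenberg bootstrap — matches the paper's. The genuine difference lies in how the corrector is built. You take the \emph{algebraic} (normal-form) corrector
\[
R^\eps=-i\eps\sum_{j\ge 2,\ell}\frac{c_j^\ell}{\lambda_1-\lambda_j}\,\varphi^\eps\,\chi_j^\ell,
\]
so that the gap is inverted pointwise and the bounds on $R^\eps$ and on $L^\eps R^\eps+i\eps\varphi^\eps(\d_t\chi^1+\xi\d_x\chi^1)$ follow directly from \eqref{gapcondition}, Proposition~\ref{prop:timedepeigen}, and the concentration of $\varphi^\eps$. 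The paper instead \emph{defines} the corrector by solving the scalar Schr\"odinger equations~\eqref{eq:gk} and then must prove, via the averaging Lemma~\ref{lem:orth} (an integration by parts exploiting the oscillation of $U_k^\eps(-s)U_j^\eps(s)$), that $g^\eps_{j,\ell}$ stays bounded in $\Sigma^p_\eps$ uniformly in $\eps$ (Proposition~\ref{prop:boundgk}). Your route bypasses Section~\ref{sec:correctionterms} entirely and is closer in spirit to the classical adiabatic construction in \cite{SpohnTeufel,Teufel}; it is more elementary here because it exploits the explicit wave-packet structure of $\varphi^\eps$ to absorb all the polynomial weights generated by $(\lambda_1-\lambda_j)^{-1}$ and its derivatives. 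The paper's PDE-based corrector, conversely, does not rely on the particular shape of the source $\varphi^\eps r_{j,\ell}$ beyond $\Sigma_\eps^q$-bounds, which makes Lemma~\ref{lem:orth} reusable for other right-hand sides. Both choices produce a corrector of size $\O(\eps e^{Ct})$ in $\Sigma^1_\eps$, hence invisible to the cubic nonlinearity, and lead to the same $\sqrt\eps\, e^{e^{Ct}}$ a priori bound and the $\log\log(1/\eps)$ horizon.
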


This adiabatic decoupling between the modes is well-known in the
linear setting and is at the basis of numerous results on
semi-classical  Schr\"odinger operator with matrix-valued potential 
 in the framework of Born-Oppenheimer
approximation for molecular dynamics. On this subject, the reader can consult the article of
H.~Spohn and S.~Teufel~\cite{SpohnTeufel} or the book of S.~Teufel
\cite{Teufel} for a review on the topic (see also \cite{BenAbdMehats}
for an adiabatic result in a nonlinear context  and  \cite{Jecko1} for
application of adiabatic theory to the obtention  of resolvent
estimates).

\begin{remark}\label{rem:log}
  Suppose that $V$
  depends on $\eps$ with $V^\eps = D+\eps W$, where $D$ and $W$ are as
  in Assumption~\ref{assumption}; this is so in the model presented in
  \cite{Bao}. Then the above result remains true for $|t|\le C\log
  \left(\frac{1}{\eps}\right)$: we gain one logarithm. See
  Remark~\ref{rem:whylog} for the key arguments. Also, the assumption
  on the initial error can be relaxed: to prove the analogue of
  Theorem~\ref{theo:matrix} with an approximation in $L^2$ up to
  $C\log 1/\eps$, \eqref{eq:erreur-init} can be replaced with 
  \begin{equation*}
    \|r_0^\eps\|_{L^2(\R)}\to 0\text{ as }\eps \to 0.
  \end{equation*}
In contrast with the general framework of this paper, no
rate is needed: the rate in \eqref{eq:erreur-init} is due to the fact
that we cannot use Strichartz estimates here. 
\end{remark}

It is also interesting to analyze the evolution of solution associated
with data which are the superposition of two data of the studied
form. We suppose  
$$\psi^\eps_0(x)=\varphi^\eps_{1}(0,x) \chi_1(x)+
\varphi^\eps_{2}(0,x)\chi_2(x),$$ 
where both functions $\varphi^\eps_{1}$ and $\varphi^\eps_{2}$ have
the form \eqref{eq:phi}, for two eigenvectors of $V$, $\chi_1$ and
$\chi_2$, and phase space points $\left(x_1,\xi_1\right)$ and
$\left(x_2,\xi_2\right)$. We assume
\begin{equation*}
  \(\chi_1,x_1,\xi_1\)\neq \(\chi_2,x_2,\xi_2\). 
\end{equation*}
We associate with the phase space points $(x_j,\xi_j)$, $j\in\{1,2\}$
the classical trajectories $(x_j(t),\xi_j(t))$, and the action $S_j(t)$
associated with $\tilde\lambda_j$ such that
$$V(x)\chi_j(x)=\tilde\lambda_j(x)\chi_j(x).$$
Note that we may have
$\tilde\lambda_1=\tilde\lambda_2$. Let us denote by
$\chi_j^\ell(t)_{\scriptstyle{1\le j\le P}\atop\scriptstyle{1\le
    \ell\le d_j}}$ a time-dependent orthonormal basis of eigenvectors
defined according to Proposition~\ref{prop:timedepeigenbis}  (see also
Proposition~\ref{prop:timedepeigen} above) with
$\chi_1^1(0,x)=\chi_1(x)$, $\chi_2(x)= \chi_1^2(0,x)$ if
$\tilde\lambda_1=\tilde\lambda_2$, $\chi_2(x)= \chi_2^1(0,x)$
otherwise, and by $\varphi^\eps_j$ the ansatz defined 
by~\eqref{eq:phi}. To unify the presentation, we write
\begin{equation*}
  \chi^1= \chi_1^1\quad ;\quad
\chi^2 = 
\left\{
  \begin{aligned}
    \chi_1^2 &\text{ if }\tilde\lambda_1=\tilde\lambda_2,\\
\chi_2^1& \text{ otherwise.}
  \end{aligned}
\right.
\end{equation*}

\begin{theorem}\label{theo:superposition1}
Set $E_j= {\xi_j^2\over 2}+\tilde\lambda_j(x_j)$ for $j\in\{1,2\}$ and 
suppose
\begin{equation*}
  \Gamma = \inf_{x\in \R}   \left\lvert
  \tilde\lambda_1(x)-\tilde\lambda_2(x)-(E_1-E_2)\right\rvert >0.
\end{equation*}
There exists  $C>0$ such that the function
$$w^\eps(t)=\psi^\eps(t)-\varphi^\eps_{1}\chi^1(t,x)-
\varphi^\eps_{2}\chi^2(t,x).$$ 
satisfies
$$\sup_{t\le C{\log}{\log}{1\over\eps}}
\(\|w^\eps(t)\|_{L^2}+\|x w^\eps(t)\|_{L^2}+ \|\eps\d_x
w^\eps(t)\|_{L^2}\)\Tend\eps 0 0 .$$
\end{theorem}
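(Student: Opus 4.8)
The plan is to run the same scheme as for Theorem~\ref{theo:matrix}. Set $\Phi^\eps = \varphi^\eps_1\chi^1 + \varphi^\eps_2\chi^2$ and $w^\eps = \psi^\eps - \Phi^\eps$; since here the datum is \emph{exactly} $\Phi^\eps(0)$, one has $w^\eps(0)=0$. Using linearity of the free part together with the single-mode consistency computation from the proof of Theorem~\ref{theo:matrix} applied to each of $\varphi^\eps_1\chi^1$ and $\varphi^\eps_2\chi^2$, one obtains
\[
i\eps\d_t w^\eps + \tfrac{\eps^2}{2}\d_x^2 w^\eps - Vw^\eps
= \Lambda\eps^{3/2}\bigl(T^\eps + \mathcal N^\eps(w^\eps)\bigr) - \mathcal R^\eps_1 - \mathcal R^\eps_2 ,
\]
where $\mathcal R^\eps_j$ are the single-mode remainders (whose Duhamel contribution to $w^\eps$ is already known to be negligible in $L^2\cap xL^2\cap \eps\d_xL^2$ on the relevant time interval), $\mathcal N^\eps(w^\eps)$ collects all terms of $|\psi^\eps|^2_{\C^N}\psi^\eps$ carrying at least one factor $w^\eps$ (so $|\mathcal N^\eps(w^\eps)|\lesssim |\Phi^\eps|^2|w^\eps| + |\Phi^\eps||w^\eps|^2 + |w^\eps|^3$), and
\[
T^\eps = |\varphi^\eps_1|^2\,\varphi^\eps_2\,\chi^2 + |\varphi^\eps_2|^2\,\varphi^\eps_1\,\chi^1
\]
is the cross interaction of the two wave packets. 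It is here crucial that the two ``off-diagonal'' cubic terms of $|\Phi^\eps|^2_{\C^N}\Phi^\eps$ proportional to $\langle\chi^1,\chi^2\rangle_{\C^N}$ \emph{vanish identically}: $\chi^1(t,x)$ and $\chi^2(t,x)$ are two distinct members of a pointwise orthonormal family (eigenvectors for distinct eigenvalues when $\tilde\lambda_1\neq\tilde\lambda_2$, two parallel-transported vectors of one orthonormal family spanning the common eigenspace when $\tilde\lambda_1=\tilde\lambda_2$ — parallel transport, \eqref{eq:eigenvector}, preserving orthonormality), so $\langle\chi^1,\chi^2\rangle_{\C^N}\equiv 0$ and only the two terms of $T^\eps$ survive.

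The core of the proof is to show that the Duhamel contribution of the interaction term, namely $\eps^{1/2}\int_0^t U^\eps(t,s)T^\eps(s)\,ds$ (with $U^\eps(t,s)$ the propagator of $i\eps\d_t + \tfrac{\eps^2}{2}\d_x^2 - V$), together with its $x$- and $\eps\d_x$-weighted versions, tends to $0$ in $L^2$ uniformly on $\{t\le C\log\log(1/\eps)\}$ — even though $\|T^\eps(s)\|_{L^2}$ can be of order $\eps^{-1/2}$ (up to the usual exponential-in-time factors). The point is that $T^\eps(s)$ is a product of two wave packets concentrated at scale $\sqrt\eps$ near $x_1(s)$ and near $x_2(s)$; by the Schwartz decay and momentum bounds of $u$ (Theorem~\ref{theo:profil}), $\|T^\eps(s)\|_{L^2}$ and its weighted versions are $\O(\eps^\infty)$ as soon as $|x_1(s)-x_2(s)|\ge \eps^{1/2-\theta}$ for a fixed small $\theta>0$. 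It therefore suffices to bound the measure of the overlap set $I^\eps_t = \{s\in[0,t] : |x_1(s)-x_2(s)| < \eps^{1/2-\theta}\}$, and this is exactly where the hypothesis $\Gamma>0$ enters: $E_j=\tfrac12\xi_j(s)^2 + \tilde\lambda_j(x_j(s))$ is the conserved energy of the $j$-th trajectory, so on $I^\eps_t$, using that the $\tilde\lambda_j$ are at most quadratic and $|x_j(s)|+|\xi_j(s)|\lesssim {\rm e}^{Cs}$,
\[
\tfrac12\bigl|\xi_1(s)^2-\xi_2(s)^2\bigr| \ge \bigl|(\tilde\lambda_1-\tilde\lambda_2)(x_1(s)) - (E_1-E_2)\bigr| - \O\bigl(\eps^{1/2-\theta}{\rm e}^{Cs}\bigr) \ge \tfrac{\Gamma}{2}
\]
for $\eps$ small, whence the relative velocity satisfies $|\dot x_1(s)-\dot x_2(s)| = |\xi_1(s)-\xi_2(s)| \gtrsim \Gamma\,{\rm e}^{-Cs}$: the classical trajectories can only cross transversally, with relative speed bounded below. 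Bounding $\ddot x_1 - \ddot x_2 = -\nabla\tilde\lambda_1(x_1)+\nabla\tilde\lambda_2(x_2) = \O({\rm e}^{Cs})$ and applying Rolle's theorem shows that $x_1-x_2$ has at most $\O(t\,{\rm e}^{2Ct}/\Gamma^2)$ zeros on $[0,t]$, around each of which $I^\eps_t$ is an interval of length $\lesssim \eps^{1/2-\theta}{\rm e}^{Ct}/\Gamma$; hence $|I^\eps_t|\lesssim \eps^{1/2-\theta}\,t\,{\rm e}^{3Ct}/\Gamma^3$ and
\[
\Bigl\|\eps^{1/2}\!\!\int_0^t\!\! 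U^\eps(t,s)T^\eps(s)\,ds\Bigr\|_{L^2} \lesssim \eps^{1/2}\!\!\int_{I^\eps_t}\!\!\!\|T^\eps(s)\|_{L^2}\,ds + \O(\eps^\infty) \lesssim \eps^{1/2-\theta}\,t\,{\rm e}^{4Ct}/\Gamma^3 + \O(\eps^\infty),
\]
which tends to $0$ on $\{t\le C\log\log(1/\eps)\}$; the $x$- and $\eps\d_x$-weighted versions are treated the same way, the weights and the commutators with $U^\eps(t,s)$ only contributing further polynomial-in-${\rm e}^{Cs}$ factors.

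It remains to control the part of $w^\eps$ driven by $\mathcal N^\eps(w^\eps)$, the contributions of $\mathcal R^\eps_j$ and of $T^\eps$ being already disposed of. This is done exactly as in the proof of Theorem~\ref{theo:matrix}: an $L^2$ energy estimate in which the linearized term $|\Phi^\eps|^2 w^\eps$ drops out by the gauge structure (its pairing with $w^\eps$ being real), leaving
\[
\frac{d}{dt}\|w^\eps(t)\|_{L^2}^2 \lesssim (\text{sources}) + \|\Phi^\eps(t)\|_{L^\infty}^2\|w^\eps(t)\|_{L^2}^2 + \|\Phi^\eps(t)\|_{L^\infty}\|w^\eps(t)\|_{L^3}^3 + \|w^\eps(t)\|_{L^4}^4 ,
\]
with $\|\Phi^\eps(t)\|_{L^\infty}\lesssim \eps^{-1/4}{\rm e}^{Ct}$; the cubic and quartic terms are absorbed by combining this with Strichartz estimates on time subintervals of length $\O(1)$ and a bootstrap, while the $x$- and $\eps\d_x$-weighted norms are propagated in parallel (the only new terms being the commutators $[x,\tfrac{\eps^2}{2}\d_x^2]w^\eps = -\eps(\eps\d_x w^\eps)$ and $[\eps\d_x,V]w^\eps = \eps V'(x)w^\eps$, both controlled by the quantities already under control). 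The Gronwall step produces a factor $\exp\!\bigl(C^{-1}{\rm e}^{Ct}\bigr)$, so choosing the constant in $C\log\log(1/\eps)$ small enough keeps this below a fixed negative power of $\eps$, which is beaten by the (powers-of-$\eps$, or $\eps^\infty$) smallness of all the sources. I expect the second step — establishing that the interaction term $T^\eps$, although of size $\eps^{-1/2}$ at isolated times, has $o(1)$ cumulative effect, via the transversality of classical crossings forced by $\Gamma>0$ and the exact orthogonality $\langle\chi^1,\chi^2\rangle_{\C^N}\equiv0$ — to be the main obstacle; everything else is a re-run of the single-mode analysis of Theorem~\ref{theo:matrix}.
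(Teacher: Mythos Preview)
Your overall strategy matches the paper's: isolate the cross-interaction contribution, show its time integral is small via the transversality of classical crossings forced by $\Gamma>0$, and close with the same Gronwall/bootstrap argument as in Theorem~\ref{theo:matrix}. Two points need correction.

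First, the orthogonality claim $\langle\chi^1,\chi^2\rangle_{\C^N}\equiv 0$ is not justified when $\tilde\lambda_1=\tilde\lambda_2$. For the single-mode consistency machinery to apply to $\varphi^\eps_k\chi^k$, the vector $\chi^k$ must be parallel-transported along the $k$-th classical trajectory $(x_k(t),\xi_k(t))$, so that $\Pi_1(\d_t\chi^k+\xi_k(t)\d_x\chi^k)=0$. When $\tilde\lambda_1=\tilde\lambda_2$ these are two \emph{different} unitary flows on the fiber (two different $\xi_k(t)$ in \eqref{def:Y}), and orthogonality at $t=0$ need not persist; your ``one orthonormal family'' picture is incompatible with the parallel-transport requirement on \emph{both} packets. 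The paper sidesteps this entirely: it bounds the interaction contribution pointwise by $\sqrt\eps\bigl(|\varphi^\eps_1|^2|\varphi^\eps_2|+|\varphi^\eps_2|^2|\varphi^\eps_1|\bigr)$, which needs only $|\chi^k|_{\C^N}=1$. Your exact formula for $T^\eps$ is therefore neither available nor needed.

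Second, and more seriously, you invoke Strichartz estimates to absorb the higher-order terms in $w^\eps$. Strichartz estimates for the propagator of $-\tfrac{\eps^2}{2}\d_x^2+V(x)$ with \emph{matrix-valued} $V$ are not available here; Fujiwara's construction applies to the diagonal part $D$ only, and this is precisely why the paper confines itself to Gagliardo--Nirenberg (see Remarks~\ref{rem:log} and~\ref{rem:whylog}, where Strichartz is recovered only in the special regime $V^\eps=D+\eps W$). The fix is to do exactly what Section~\ref{sec:consistency} does: run a bootstrap on $\|w^\eps(t)\|_{L^\infty}\le\eps^{-1/4}{\rm e}^{Ct}$, close it via $\|w^\eps\|_{L^\infty}\lesssim\eps^{-1/2}\|w^\eps\|_{L^2}^{1/2}\|\eps\d_x w^\eps\|_{L^2}^{1/2}$, and propagate the three norms simultaneously. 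Finally, you gloss over the correction terms: the ``single-mode remainders'' $\mathcal R^\eps_j$ contain the $O(1)$ piece $i(\d_t\chi^k+\xi_k\d_x\chi^k)\varphi^\eps_k$, whose Duhamel contribution is \emph{not} directly small --- one must add $\eps g^\eps_k$ to $w^\eps$ as in \S\ref{sec:strategy} (and as the paper does in Section~\ref{sec:sup}) before the source becomes $O(\sqrt\eps\,{\rm e}^{Ct})$.
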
 

Note that if $\tilde \lambda_1=\tilde\lambda_2$, one recovers the
condition $E_1\not=E_2$ of~\cite{CF-p}.  
The proof of Theorem~\ref{theo:superposition1} follows the same lines
as in  \cite[Section~6]{CF-p}. The constant $\Gamma$ controls the
frequencies of time interval where trajectories cross. 

\begin{remark}
In finite time, the situation is different whether
$\tilde\lambda_1=\tilde\lambda_2$ or not. If
$\tilde\lambda_1=\tilde\lambda_2$, the superposition holds in finite
time without any condition on $\Gamma$; this comes from the fact that
the trajectories $x_1(t)$ and $x_2(t)$ only cross on isolated
points (see \cite{CF-p}). However, if
$\tilde\lambda_1\not=\tilde\lambda_2$  one may 
have $x_1(t)=x_2(t)$ on intervals of non-empty interior: the condition
$\Gamma\not=0$ prevents this situation from happening. For example, if  
$$V(x)=\begin{pmatrix}{\rm cos} x & {\rm sin} x \\ {\rm sin}x & -{\rm
    cos} x \end{pmatrix} +v(x) {\rm Id}$$ 
with $v$ smooth and at most quadratic, we have $\lambda_1(x)=v(x)-1$
and $\lambda_2(x)=v(x)+1$: classical trajectories for both modes,
issued from the same point of the phase space, are equal. 
\end{remark} 

\subsection{Strategy of the proof of Theorem~\ref{theo:matrix}}
\label{sec:strategy}

The proof is more complicated than in the scalar
case \cite{CF-p}, due to the fact that the spectral projectors do not
commute with the Laplace operator.  From this perspective, a much
finer geometric understanding is needed and we revisit \cite{Hag94,
  GMMP, HJ98,SpohnTeufel,Teufel} by adapting to our nonlinear context
ideas contained therein.

Observe first that the function $\varphi^\eps$ satisfies
\begin{equation}\label{eq:phieps}
i\eps\partial_t \varphi^\eps+{\eps^2\over 2} \d_x^2 \varphi^\eps
= \Tc_\eps(t,x)\varphi^\eps 
+\Lambda\,\eps^{3/2}|\varphi^\eps|_{\C^N}^2\varphi^\eps ,
\end{equation}
where 
$$\Tc_\eps(t,x)= \lambda_1(x(t))+\lambda'_1(x(t))(x-x(t))+{1\over 2}
\lambda''(x(t))(x-x(t))^2.$$ 
This term corresponds to the beginning of the Taylor expansion of
$\l_1$ about $x(t)$.
Therefore,  the function
$w^\eps(t,x)=\psi^\eps(t,x)-\varphi^\eps(t,x) \chi^1(t,x)$
satisfies
 $ w^\eps_{\mid t=0}=r_0^\eps$ and 
$$i\eps\partial_tw_\eps (t,x)+{\eps^2\over 2} \d_x^2  
w^\eps(t,x) -V(x) w^\eps(t,x)=\eps \widetilde{NL}^\eps
(t,x)+\eps \widetilde{L}^\eps(t,x)$$ 
where 
\begin{align*}
\widetilde{NL}^\eps & =  \Lambda \, \eps^{1/2}\left(
  \left\lvert\varphi^\eps\chi^1+ w^\eps\right\rvert_{\C^N}^2
(\varphi^\eps\chi^1+
  w^\eps) -|\varphi^\eps |^2\varphi^\eps\chi^1\right),\\ 
\widetilde{L}^\eps & =  i\partial_t\chi^1 \varphi^\eps
+\eps\partial_x\chi^1\partial_x \varphi^\eps+{\eps\over 2}
\varphi^\eps \partial_x^2 \chi^1 +\eps^{-1}\(\lambda_1(x)- \Tc_\eps\)
\varphi^\eps\chi^1. 
\end{align*}
Since $\varphi^\eps$ is
concentrated near $x=x(t)$ at scale $\sqrt\eps$,   we have
$$(\lambda_1(x)-\Tc_\eps)\varphi^\eps=\O\(\eps^{3/2}{\rm
  e}^{Ct}\)\;\;{\rm in}\;\;L^2(\R),$$
where we have used Theorem~\ref{theo:profil}.  
The term $\widetilde{L}^\eps$ \emph{a priori} presents  an $\O(1)$
contribution, which is an obstruction to infer that $ w^\eps$ is
small by applying Gronwall
Lemma. Observing that in view of the estimates on the classical flow
(see~\eqref{expgrowth}) 
$$\eps\partial_x \varphi^\eps =i\xi(t)\varphi^\eps
+\O\left(\sqrt\eps\,{\rm e}^{Ct}\right)\;\;\text{ in}\;\;L^2(\R),$$ 
we write,  
$$\widetilde L^\eps=i\left(\partial_t \chi^1 +\xi(t)\partial_x
  \chi^1\right)\varphi^\eps +\O\left(\sqrt\eps\,{\rm
    e}^{Ct}\right)\;\;{\rm in} \;\;L^2(\R).$$ 
The choice of the time-dependent eigenvectors ensures that for all
time,  the $\O(1)$ contribution of $\widetilde{L}^\eps$ is orthogonal
to the first mode (the eigenspace associated with $\lambda^1$).
Then, to get rid of these terms, we introduce a correction term to
$w^\eps$. We set  
$$\theta^\eps(t,x)=w^\eps(t,x)+\eps g^\eps(t,x),\;\;
g^\eps(t,x)=\sum_{2\le j\le P} \,\sum_{1\le \ell\le d_j}
g^\eps_{j,\ell}(t,x)\chi_j^\ell(x),$$ 
 where for $j\ge 2$ and for $1\le \ell\le d_j$, the function
 $g_{j,\ell}^\eps(t,x)$ solves the scalar Schr\"odinger equation 
  \begin{equation}\label{eq:gk}
     i\eps\partial_t g_{j,\ell}^\eps +{\eps^2\over 2} \d_x^2 
     g_{j,\ell}^\eps-\lambda_j(x)g_{j,\ell}^\eps=\varphi^\eps
     r_{j,\ell}\quad ;\quad
  g_{j,\ell\mid t=0}^\eps=0,
  \end{equation}
  where 
  \begin{equation}\label{def:rkl}
  r_{j,\ell}(t,x)=-i\left(\partial_t
    \chi^1(t,x)+\xi(t)\partial_x\chi^1(t,x)\;,\;
    \chi_j^\ell(x)\right)_{\C^N}. 
  \end{equation}
 The function $\theta^\eps(t)$ then solves 
 \begin{equation}\label{eq:weps}
 \left\{ 
   \begin{aligned}
      i\eps\partial_t \theta^\eps(t,x) +{\eps^2\over 2} \d_x^2   \theta^\eps(t,x)
      &=V(x) \theta^\eps(t,x)+\eps NL^\eps (t,x)+\eps L^\eps(t,x), 
 \\
 \theta^\eps_{\mid t=0}&=r_0^\eps,
   \end{aligned}
\right.
 \end{equation}
 with 
 \begin{align}\label{def:NLeps}
 NL^\eps & =  \Lambda \,
 \eps^{1/2}\left(|\varphi^\eps\chi^1+\theta^\eps-\eps g^\eps|_{\C^N}^2 
(\varphi^\eps \chi^1+\theta^\eps-\eps g^\eps)
-|\varphi^\eps |^2\varphi^\eps\chi^1\right) ,\\ 
   \label{def:Leps}
 L^\eps  & = 
   \widetilde L^\eps +\left(i\eps\partial_t+{\eps^2\over 2}\d_x^2 
    -V(x)\right)g^\eps(t,x)\\
    \nonumber 
  &  =  
  \O(\sqrt\eps{\rm e}^{Ct})+\sum_{2\le j\le P} \,\sum_{1\le \ell\le
    d_j} \left[
    {\eps^2\over 2} \d_x^2 ,\chi_j^\ell\right] g^\eps_{j,\ell}
 \end{align}
 where the $\O(\sqrt\eps{\rm e}^{Ct})$ holds in $L^2 $.
  The proof of the theorem then follows from a precise control of the
  functions $\chi_j^\ell$ and $g^\eps_{j,\ell}$, which is achieved in
  Sections~\ref{sec:eigenvector} and \ref{sec:correctionterms},
  respectively. Then, the analysis of  $ 
   \theta^\eps$ as~$\eps$ goes to zero by an energy method  is
   presented in Section~\ref{sec:consistency}.  

\section{The family of time-dependent eigenvectors}
\label{sec:eigenvector}

In this section we prove Proposition~\ref{prop:timedepeigen},
recalling the construction of the eigenvectors
satisfying~\eqref{eq:eigenvector}, and analyzing the behavior of their
derivatives for large time. We follow the proof of \cite{Hag94}. 
More generally, we prove the following result which implies
Proposition~\ref{prop:timedepeigen}.  We consider the Hamiltonian
curves of ${1\over 
  2}|\xi|^2+\lambda_j(x)$, 
that we denote by $\left(x_j(t),\xi_j(t)\right)$.

 \begin{proposition} \label{prop:timedepeigenbis}
 There exists a smooth orthonormal basis of $\C^N$
 $\left(\chi^{\ell}_j(t,x)\right)_{\scriptstyle{1\le\ell\le
     d_j}\atop\scriptstyle{1\le j\le P}}$  such that  for 
 all $t$,
 $\left(\chi^{\ell}_j(t,x)\right)_{1\le\ell\le d_j}$ spans the
 eigenspace associated to $\lambda_j$,  with
 $\chi^1(0,x)=\chi(x)$ and for $m\in\{1,\cdots, d_j\}$,
$$
\left(\chi^m_j(t,x),\partial_t \chi^\ell_j(t,x) + \xi_j(t)\partial_x \chi
  ^\ell_j(t,x)\right)_{\C^N}=0. 
$$
Moreover, 
for $\ell\in\{1,\dots,d_j\}$, $k,p\in\N$, there exists a constant
$C=C(p,k)$ such that  
$$
\left\lvert \partial_t^p \partial_x^k\chi ^\ell_j
  (t,x)\right\rvert_{\C^N}\le C \, {\rm 
  e}^{C t} \<x\>^{(k+p)(1+n_0)}, $$
where $n_0$ appears in \eqref{gapcondition}. 
\end{proposition}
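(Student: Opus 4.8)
The plan is to follow the construction of \cite{Hag94}: realize the frame $\left(\chi_j^\ell\right)$ as the solution of a linear transport equation along the $j$-th classical flow, whose generator is Kato's connection on the spectral bundle of $V$, and then to propagate weighted derivative bounds along the characteristics of that transport equation. Throughout, $(x_j(t),\xi_j(t))$ denotes the Hamiltonian curve of $\frac12|\xi|^2+\lambda_j(x)$ and I write $X_j(t)=\int_0^t\xi_j(s)\,ds=x_j(t)-x_j(0)$, so that $|X_j(t)|\lesssim e^{Ct}$ by the at most quadratic growth of $\lambda_j$ (as in \eqref{expgrowth}).

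First I would set up the spectral data. For $1\le j\le P$ let $\Pi_j(x)$ be the orthogonal projector of $\C^N$ onto the eigenspace of $V(x)$ attached to $\lambda_j(x)$. Writing
\[
\Pi_j(x)=\frac{1}{2i\pi}\oint_{|z-\lambda_j(x)|=\rho(x)}\left(z-V(x)\right)^{-1}\,dz,\qquad \rho(x)=\frac{c_0}{3}\langle x\rangle^{-n_0},
\]
the gap condition \eqref{gapcondition} guarantees that on this contour $\mathrm{dist}(z,\Sp V(x))=\rho(x)$, hence $\|(z-V(x))^{-1}\|\lesssim\langle x\rangle^{n_0}$; rescaling $z=\lambda_j(x)+\rho(x)w$ and differentiating under the integral sign yields that $x\mapsto\Pi_j(x)$ is smooth with $|\partial_x^k\Pi_j(x)|\lesssim\langle x\rangle^{k(1+n_0)}$ for every $k$ --- this is Lemma~\ref{lem:projectors}. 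I then fix, once and for all, a smooth orthonormal frame $\left(\chi_j^\ell(0,\cdot)\right)_{1\le\ell\le d_j}$ of the range of $\Pi_j(\cdot)$ satisfying the same weighted bounds on its $x$-derivatives, with $\chi_1^1(0,\cdot)=\chi$; this is possible because every smooth Hermitian bundle over $\R$ is trivial (concretely, Kato-transport a fixed basis in the $x$ variable, completing $\chi$ inside the range of $\Pi_1$).

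Next comes the construction. With $D_t^{(j)}=\partial_t+\xi_j(t)\partial_x$, and since $\Pi_j$ does not depend on $t$, Kato's generator along the $j$-th flow is
\[
K_j(t,x)=\left[D_t^{(j)}\Pi_j,\Pi_j\right]=\xi_j(t)\left[\partial_x\Pi_j(x),\Pi_j(x)\right],
\]
which is anti-Hermitian because $\Pi_j$ and $\partial_x\Pi_j$ are Hermitian. I define $\chi_j^\ell$ as the solution of $D_t^{(j)}\chi_j^\ell=K_j\chi_j^\ell$ with the above initial datum, obtained by the method of characteristics: the characteristic issued from $(0,y)$ is $t\mapsto(t,y+X_j(t))$, globally defined, and along it the equation is a linear ODE with anti-Hermitian generator, hence uniquely and globally solvable and smooth in the parameter $y$. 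Kato's algebra then yields all the structure claimed: from $\Pi_jK_j-K_j=-(D_t^{(j)}\Pi_j)\Pi_j$ one gets that $\Pi_j\chi_j^\ell-\chi_j^\ell$ solves a homogeneous linear ODE along characteristics, hence stays $0$, so $\chi_j^\ell(t,\cdot)$ remains in the range of $\Pi_j$; anti-Hermiticity of $K_j$ preserves the Gram matrix, so the family stays orthonormal; and, using $\Pi_j\chi_j^\ell=\chi_j^\ell$, one computes $D_t^{(j)}\chi_j^\ell=\xi_j(t)(1-\Pi_j)\partial_x\Pi_j\,\chi_j^\ell$, which lies in the orthogonal complement of the range of $\Pi_j$ --- this is exactly \eqref{eq:eigenvector}, for every $m$ (the case $m=\ell$ being automatic), and $\chi^1(0,x)=\chi(x)$ by construction.

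Finally, the growth bounds. Setting $\psi_j^\ell(t,y):=\chi_j^\ell(t,y+X_j(t))$, the equation becomes $\partial_t\psi_j^\ell=K_j(t,y+X_j(t))\psi_j^\ell$. Differentiating in $y$ and using that the top-order term $K_j(t,y+X_j(t))\partial_y^k\psi_j^\ell$ is anti-Hermitian, hence does not affect $\tfrac{d}{dt}|\partial_y^k\psi_j^\ell|$, one is left with a forced estimate whose source involves $(\partial_x^mK_j)(t,y+X_j(t))$, $1\le m\le k$; since $|\xi_j(t)|\lesssim e^{Ct}$, $|X_j(t)|\lesssim e^{Ct}$, so $\langle y+X_j(t)\rangle\lesssim\langle y\rangle e^{Ct}$ and, by the projector bounds, $|(\partial_x^mK_j)(t,y+X_j(t))|\lesssim e^{Ct}\langle y\rangle^{(m+1)(1+n_0)}$. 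A Gronwall argument, inductive on $k$ --- controlling separately the component of $\partial_x^k\chi_j^\ell$ normal to the range of $\Pi_j$, tied directly to $\partial_x^k\Pi_j$ via $(1-\Pi_j)\partial_x\chi_j^\ell=(\partial_x\Pi_j)\chi_j^\ell$, and its tangential component, governed by a transport equation for the connection coefficients $(\chi_j^{\ell'},\partial_x\chi_j^\ell)_{\C^N}$ --- yields $|\partial_y^k\psi_j^\ell(t,y)|\lesssim e^{Ct}\langle y\rangle^{k(1+n_0)}$; time derivatives then cost nothing extra, the equation being explicitly first order in $t$, so $\partial_t^p\partial_y^k\psi_j^\ell$ is a finite sum of products of $\partial_y$-derivatives of $\psi_j^\ell$ with space-and-time derivatives of $K_j$ composed with the characteristic (note $\partial_tK_j(t,x)=\dot\xi_j(t)[\partial_x\Pi_j,\Pi_j]$ with $|\dot\xi_j(t)|\lesssim e^{Ct}$), all already estimated; transferring back through $\chi_j^\ell(t,x)=\psi_j^\ell(t,x-X_j(t))$ and the chain rule gives the stated bound, and Proposition~\ref{prop:timedepeigen} is the $j=1$ instance. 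The step I expect to be the main obstacle is precisely this last one: closing the induction with the \emph{sharp} polynomial degree $(k+p)(1+n_0)$, rather than some larger power, forces one to exploit the normal/tangential splitting above and to keep meticulous track of how the exponentially growing argument $\langle y+X_j(t)\rangle$ is absorbed into the $e^{Ct}$ factors without inflating the $\langle y\rangle$-power, organizing the induction over $(k,p)$ so that the loss of derivative in the transport equation is compensated by working along characteristics.
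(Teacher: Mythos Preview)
Your proposal is essentially the paper's own argument: the transport equation $D_t^{(j)}\chi_j^\ell=K_j\chi_j^\ell$ with $K_j=\xi_j(t)[\partial_x\Pi_j,\Pi_j]$ is exactly \eqref{def:Y}--\eqref{def:chi} after the substitution $z=x-x_j(t)$ (your $\psi_j^\ell$ is the paper's $Y_j^\ell$ up to a constant shift), the preservation of the eigenspace is proved by the same ODE for the normal component, orthonormality and the parallel-transport identity follow from anti-Hermiticity together with $\Pi_j[\Pi_j,\partial_x\Pi_j]\Pi_j=0$, and the growth bounds are obtained by differentiating the ODE, using unitarity of the propagator, and inducting first on~$k$ then on~$p$.

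The one place you diverge is the normal/tangential splitting you introduce to secure the \emph{sharp} weight $\langle x\rangle^{(k+p)(1+n_0)}$. The paper does not do this: it simply invokes Lemma~\ref{lem:projectors} on the lower-order source terms in \eqref{eq:dbetaY} and closes the induction directly. Your instinct that a naive count loses one power of $\langle x\rangle^{1+n_0}$ (because $|\partial_x^aK_j|\lesssim\langle x\rangle^{(a+1)(1+n_0)}$) is legitimate, but for every downstream use in the paper the exact polynomial exponent is irrelevant --- only ``polynomial in $x$ with exponentially growing constants'' is needed, since $\varphi^\eps$ has all moments controlled by Theorem~\ref{theo:profil}. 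So your splitting is a refinement rather than a correction, and the simpler route suffices.
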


\begin{proof}[Proof of Proposition~\ref{prop:timedepeigenbis}]
We consider a smooth basis of eigenvectors
$(\chi_j^\ell(0))_{\scriptstyle{1\le\ell\le d_j}\atop\scriptstyle{1\le
    j\le P}}$ such that $\chi_1^1(0)=\chi$. Then, we denote by
$\Pi_j(x)$ the smooth eigenprojector associated with the 
eigenvalue $\lambda_j(x)$ and define 
$$K_j(x)=-i\left[\Pi_j(x),\partial_x\Pi_j(x)\right].$$
 We set $z=x-x_j(t)$
and we consider  the Schr\"odinger type equation 
\begin{equation}\label{def:Y}
i\partial_t Y_j^\ell(t,z)=\xi_j(t)
K_j(z+x_j(t))Y_j^\ell(t,z)\quad ;\quad
Y_j^\ell(0,z)=\chi_j^\ell(x_j(0)+z).
\end{equation}
Let us prove that the  vector $Y_j^\ell(t,z)$ is in the eigenspace of
$\lambda_j(x_j(t)+z)$. Indeed, 
the evolution of $Z_j^\ell(t,z)=\left({\rm
    Id}-\Pi_j(x_j(t)+z)\right)Y_j^\ell(t,z)$ obeys to  
$Z_j^\ell(0,z)=0$ and 
\begin{align*}
\partial_t Z_j^\ell(t,z)& =  -\xi_j(t)\partial_x\Pi_j
(x_j(t)+z)Y_j^\ell \\ 
&\quad  -\xi_j(t) ({\rm Id} -
\Pi_j(x_j(t)+z))[\Pi_j(x_j(t)+z),\partial_x\Pi_j(x_j(t)+z)] Y_j^\ell\\ 
& =  -\xi_j(t) \partial_x\Pi_j(x_j(t)+z) ({\rm Id}-\Pi_j(x_j(t)+z))Y_j^\ell\\
&  =  -\xi_j(t) \partial_x\Pi_j (x_j(t)+z)Z_j^\ell
\end{align*}
where we have used $\partial_x
\Pi_j=\partial_x(\Pi_j^2)=\Pi_j\partial_x\Pi_j+\(\partial_x\Pi_j\)\Pi_j,$
whence 
\begin{equation*}
  \Pi_j (\partial_x\Pi_j )\Pi_j =\Pi_j \(\Pi_j \partial_x\Pi_j 
+\(\partial_x\Pi_j \)\Pi_j \)\Pi_j 
=2  \Pi_j  (\d_x \Pi_j ) \Pi_j =0.
\end{equation*}
Therefore, $Z_j^\ell(t)$ satisfies an equation of the form $\partial_t
Z_j^\ell=A(t,z)Z_j^\ell$, which combined with $Z_j^\ell(0)=0$, implies
$Z_j^\ell(t)=0$ for all $t\in\R$: the vectors $Y_j^\ell(t,z)$ are
eigenvectors of $V(x_j(t)+z)$ for the eigenvalue
$\lambda_j(x_j(t)+z)$.\\ 
  Besides, since $\xi_j(t)
K_j(z+x_j(t))$ is self-adjoint, $Y_j^\ell(t,z)$ is normalized for all
$t$,  and  
the family  $(Y_j^\ell)_{1\le\ell\le d_j}$  is orthonormal.   
We define $\chi_j^\ell(t,x)$ by
\begin{equation}\label{def:chi}
\chi_j^\ell(t,x)=Y_j^\ell(t, x-x_j(t))
\end{equation}
and 
we obtain an orthonormal basis of eigenvectors of $V(x)$.\\
 It remains to check that~\eqref{eq:eigenvector} holds.
We have 
\begin{eqnarray*}
\partial_t \chi_j^\ell+\xi_j(t)\partial_x \chi_j^\ell & = & 
\partial_tY_j^\ell (t,x-x_j(t))\\
& = & i
\xi_j(t)K_j(x)\chi_j^\ell\\ &=& -\xi_j(t)[\Pi_j(x),\partial_x\Pi_j(x)]\chi_j^\ell,
\end{eqnarray*} 
whence
\begin{eqnarray*}
\left(\partial_t \chi_j^\ell+\xi_j(t)\partial_x
  \chi_j^\ell\;,\;\chi_j^k\right)_{\C^N} & = & - 
\xi_j(t) \left([\Pi_j,\partial_x\Pi_j] \chi_j^\ell,\chi_j^k\right)_{\C^N}\\
& = & -\xi_j(t) \left(\Pi_j[\Pi_j,\partial_x\Pi_j]
  \Pi_j\chi_j^\ell,\chi_j^k\right)_{\C^N} 
\end{eqnarray*}
since $\chi_j^{\ell/k}=\Pi_j\chi_j^{\ell/k}$.
We then observe that $\Pi_j^2=\Pi_j $ implies
$$\Pi_j[\Pi_j,\partial_x\Pi_j] \Pi_j=\Pi_j^2 \partial_x \Pi_j\Pi_j
-\Pi_j\partial_x\Pi_j\Pi_j^2=0.$$ 
This concludes the first part of
Proposition~\ref{prop:timedepeigen}. It remains to study the behavior
at infinity of the vectors $\chi_j^\ell(t,x)$ and of their
derivatives. 

\smallbreak

\noindent By the definition of $\chi_j^\ell(t,x)$ in~\eqref{def:chi},
it is enough to prove  
\begin{eqnarray*}
| \partial_t^p\partial_x^k Y_j^\ell (t,z)|_{\C^N}& \lesssim & {\rm
  e}^{Ct} \<x_j(t)+z\>^{(p+k)(1+n_0)}. 
\end{eqnarray*}
For this, 
we crucially use the estimates of Lemma~\ref{lem:projectors} and we
argue by induction.   
Let us first consider the case $p=1$ and 
$k=0$. 
By Lemma~\ref{lem:projectors}, we have $|K_j(x)|\lesssim
\<x\>^{1+n_0}$, whence~\eqref{def:Y} gives 
$$|\partial_t Y_j^\ell(t,x-x_j(t))|\lesssim
|\xi_j(t)||K_j(x)|\lesssim{\rm e}^{Ct} \<x\>^{1+n_0}.$$ 
Let us now suppose $k\ge 1$ and $p=0$. We observe that
$\partial_z^k Y_j^\ell(t,z)$ solves 
\begin{equation}\label{eq:dbetaY}
\left\{
  \begin{aligned}
  i \partial_t \partial_z^k Y_j^\ell (t,z)& =-i \xi_j(t)
K_j(z+x_j(t))\partial_z^k Y_j^\ell(t,z) +f(t,z),\\
\partial_z^k Y_j(0,z)&=\partial_x^k \chi_j^\ell (0, z+x_j(0)), 
  \end{aligned}
\right.
\end{equation}
where 
$$f(t,z)=\sum_{0\le\gamma\le k-1}c_\gamma\, \xi_j(t) \partial^\gamma_z
K_j(z+x_j(t))\partial_z^{k-\gamma}Y_j^\ell(t,z),$$ 
for some complex numbers 
$c_\gamma$ independent of $t$ and $z$. We obtain
$$\partial_z^k Y_j^\ell (t,z)={\mathcal U}_j(t,0) \partial_x^k
\chi_j^\ell(z+x_j(0))+\int_0^t {\mathcal U}_j(t,s)f(s,z)ds ,$$
where $ {\mathcal U}_j(t,s)$ denotes the unitary propagator associated
to \eqref{def:Y} (when the initial time is equal to $s$). 
We have  by Lemma~\ref{lem:projectors}
$$|\partial_z^k Y^\ell_j(0,z)|_{\C^N} = \left|\partial_x^k \chi_j^\ell
  (0, z+x_j(0))\right|_{\C^N}\lesssim\<z+x_j(0)\>^{k(1+n_0)},$$ 
therefore the induction assumption 
$$\forall \gamma\in \{0,\dots,k-1\},\;\; | \partial_x^\gamma Y_j^\ell
(t,z)|_{\C^N} \lesssim  {\rm e}^{Ct} \<x_j(t)+z\>^{\gamma(1+n_0)}$$ 
implies, along with Lemma~\ref{lem:projectors},
$$| \partial_x^k Y_j^\ell (t,z)|_{\C^N} \lesssim  {\rm e}^{Ct}
\<x_j(t)+z\>^{k(1+n_0)}.$$ 
We have obtained the estimate for $p=0$, $k\in \N$, and for $p=1$,
$k=0$. Note that Equation~\eqref{eq:dbetaY} yields 
$$\forall k\in\N,\;\; | \partial_t \partial_x^k Y_j^\ell
(t,z)|_{\C^N} \lesssim  {\rm e}^{Ct}
\<x_j(t)+z\>^{(1+k)(1+n_0)},$$ 
and allows to prove the general estimate for time derivatives by an
induction argument which crucially uses the fact that we have an exponential
control of the derivatives in time of~$\xi_j(t)$. This property follows
by induction from \eqref{eq:traj}, \eqref{expgrowth}, and the fact
that $\l_j$ is at most quadratic. 
\end{proof}

Before concluding this section, note that 
in view of the definition of the function $r_{j,\ell}$ in~(\ref{def:rkl}), 
Proposition~\ref{prop:timedepeigen} gives the following corollary.

\begin{corollary}\label{cor:rk} 
For all $p\in\N$ and $k\in \N$, there exists a constant $C=C(p,k)$ such that,
for $x\in\R$, $j\in\{1,\cdots, P\}$ and $\ell\in\{1,\cdots,d_j\}$,
$$\left|\partial_t^p\partial_x^k r_{j,\ell}(t,x)\right|\lesssim{\rm
  e}^{Ct} \< x \>^{(1+p+k)(1+n_0)}.$$ 
\end{corollary}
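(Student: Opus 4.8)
The plan is to prove Corollary~\ref{cor:rk} by differentiating the definition \eqref{def:rkl} of $r_{j,\ell}$ and using the bounds of Proposition~\ref{prop:timedepeigen} on the derivatives of $\chi^1$. Recall that
$$r_{j,\ell}(t,x)=-i\left(\partial_t \chi^1(t,x)+\xi(t)\partial_x\chi^1(t,x)\;,\;\chi_j^\ell(x)\right)_{\C^N},$$
where $\chi^1=\chi^1_1$ is the time-dependent eigenvector of Proposition~\ref{prop:timedepeigen} attached to $\lambda_1$, and the $\chi_j^\ell$ for $j\ge 2$ are the \emph{time-independent} members of the basis fixed in the Notation following Proposition~\ref{prop:timedepeigen}. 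Since the inner product on $\C^N$ is bilinear over $\partial_t,\partial_x$, applying $\partial_t^p\partial_x^k$ to $r_{j,\ell}$ produces a finite sum, with fixed combinatorial coefficients, of terms of the form
$$\left(\partial_t^{a}\partial_x^{b}\bigl(\partial_t\chi^1+\xi(t)\partial_x\chi^1\bigr)\;,\;\partial_x^{c}\chi_j^\ell\right)_{\C^N},\qquad a+b\le p,\ c\le k,$$
where, expanding the Leibniz rule in the $\xi(t)\partial_x\chi^1$ term, one also picks up factors $\partial_t^m\xi(t)$ with $m\le p$.

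The key ingredients are then: (i) Proposition~\ref{prop:timedepeigen} gives $|\partial_t^{a+1}\partial_x^{b}\chi^1|_{\C^N}+|\partial_t^{a}\partial_x^{b+1}\chi^1|_{\C^N}\lesssim \mathrm{e}^{Ct}\<x\>^{(a+b+1)(1+n_0)}$; (ii) the same Proposition (or simply smoothness and the standard estimates on eigenprojectors, Lemma~\ref{lem:projectors}) gives $|\partial_x^c\chi_j^\ell(x)|_{\C^N}\lesssim\<x\>^{c\,n_0}\le \<x\>^{c(1+n_0)}$, uniformly in time since these vectors do not depend on $t$; and (iii) the classical flow bound \eqref{expgrowth} together with the fact that $\lambda_1$ is at most quadratic yields $|\partial_t^m\xi(t)|\lesssim \mathrm{e}^{C_m t}$ for every $m\in\N$ — exactly the exponential control of time-derivatives of $\xi(t)$ already invoked at the end of the proof of Proposition~\ref{prop:timedepeigenbis}, obtained by induction from \eqref{eq:traj}. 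Multiplying these bounds, the generic term above is $\lesssim \mathrm{e}^{Ct}\<x\>^{(a+b+1)(1+n_0)}\<x\>^{c(1+n_0)}\lesssim \mathrm{e}^{Ct}\<x\>^{(p+k+1)(1+n_0)}$, since $a+b\le p$ and $c\le k$; summing the finitely many terms (and absorbing all the implied constants into a single $C=C(p,k)$) gives the claimed estimate.

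I do not expect a genuine obstacle here: the statement is a direct corollary, and the only point requiring a line of justification is the exponential-in-time bound on $\partial_t^m\xi(t)$, which is not literally displayed as a separate estimate but is established in passing in the proof of Proposition~\ref{prop:timedepeigenbis} (by differentiating \eqref{eq:traj} and using \eqref{expgrowth} and the at-most-quadratic hypothesis on $\lambda_1$, so that $\nabla\lambda_1$ is affine and $\partial_t\xi=-\nabla\lambda_1(x(t))$ feeds the induction). One should also take care that the worst power of $\<x\>$ is collected correctly: it is convenient to bound every factor by $\<x\>^{(\cdot)(1+n_0)}$ uniformly (even the $\chi_j^\ell$, whose natural bound carries only $n_0$ rather than $1+n_0$ in the exponent), so that the powers simply add up to at most $(1+p+k)(1+n_0)$.
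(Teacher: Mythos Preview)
Your proof is correct and follows exactly the approach the paper indicates: the paper gives no detailed argument, merely remarking that the corollary follows from Proposition~\ref{prop:timedepeigen} and the definition~\eqref{def:rkl}, and your Leibniz-rule unpacking is precisely the intended computation. Two cosmetic slips worth fixing: since $\chi_j^\ell$ is time-independent the Leibniz indices should satisfy $a=p$ and $b+c=k$ (not $a+b\le p$, $c\le k$), and Lemma~\ref{lem:projectors} yields $\<x\>^{c(1+n_0)}$ rather than $\<x\>^{cn_0}$ for $\partial_x^c\chi_j^\ell$; neither affects your final bound.
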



\section{Analysis of the correction terms}\label{sec:correctionterms}

In this section, we will make use of the 
following norms defined for $p\in\N$,
$$\| f\|_{\Sigma_\eps^p}=\sup_{\alpha+\beta\le p} \left\lVert \lvert x\rvert^\alpha
\eps^\beta f^{(\beta)}(x)\right\rVert_{L^2}.$$ 
We associate  with this norm the 
functional space $\Sigma_\eps^p$ defined by
$$\Sigma_\eps^p=\{f\in L^2(\R^d),\;\;\
\|f\|_{\Sigma_\eps^p}<\infty\}.$$
In view of  \eqref{expgrowth} and \eqref{Linftybound}, for all $p\in
\N$, there exists $c(p)$ such that
\begin{equation}
  \label{eq:phiSigma}
  \|\varphi^\eps(t)\|_{\Sigma^p_\eps}\lesssim {\rm e}^{c(p)t},\quad
  \forall t\ge 0.
\end{equation}
We can obviously take $c(0)=0$ by conservation of the $L^2$-norm, but
in general, the norm of $\varphi^\eps$ in $\Sigma_\eps^1$  potentially
grows exponentially in time (see \cite{Ca-p}).   
 We denote by $U^\eps_k(t)$  the semi-group associated with the
 operator $-{\eps^2\over 2} \d_x^2  +\lambda_k(x)$ and we observe that
 for $p\in\N$, there exists a constant $C(p)$ such that  
 \begin{equation}\label{est:USigma}
 \|U^\eps_k(t)\|_{{\mathcal L}(\Sigma^p_\eps)}\le C(p){\rm e}^{C(p)|t|}.
 \end{equation}
  The following  averaging lemma shows an asymptotic
  orthogonality property.
  \begin{lemma}\label{lem:orth}
 For $T>0$ and $k\not=j$,
 there exists a constant $C$ such that 
 $$\forall t\in [0,T],\;\;\forall p\in\N,\;\;\;\;\left\|{1\over i\eps}
   \int_0^tU_k^\eps(-s)U_j^\eps(s) ds\right\|_{{\mathcal
     L}\(\Sigma_\eps^{(p+3)n_0+p+2}, \Sigma_\eps^p\)}\le C{\rm
   e}^{Ct}.$$ 
 \end{lemma}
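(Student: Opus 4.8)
The plan is to exploit the fact that $U_k^\eps(-s)U_j^\eps(s)$, applied to a function concentrated near a point, produces a rapidly oscillating phase whose stationary points are controlled by the gap condition \eqref{gapcondition}, so that integration by parts in $s$ gains a factor of $\eps$. Concretely, I would first reduce to understanding the action of $V^\eps(s):=U_k^\eps(-s)U_j^\eps(s)$ on a wave packet. The point is that $\partial_s V^\eps(s) = \frac{i}{\eps}U_k^\eps(-s)\bigl(\lambda_k(x)-\lambda_j(x)\bigr)U_j^\eps(s)$, so $\frac{1}{i\eps}\int_0^t V^\eps(s)\,ds$ is \emph{not} directly an integration-by-parts of $V^\eps$ itself; instead I would work at the level of the Schr\"odinger flows acting on coherent states. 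Since in our application the operator is only ever applied to $\varphi^\eps r_{j,\ell}$ (or derivatives thereof), which is $\eps^{-1/4}$ times a fixed profile concentrated at scale $\sqrt\eps$ around $x(t)$ with frequency $\xi(t)/\eps$, I would use the classical WKB/Hagedorn description: $U_j^\eps(s)$ transports such a wave packet along the Hamiltonian flow of $\frac12|\xi|^2+\lambda_j$, up to errors controlled in $\Sigma_\eps^p$ by $\mathcal O(\sqrt\eps\,e^{Cs})$, and similarly $U_k^\eps(-s)$ transports backwards along the flow of $\frac12|\xi|^2+\lambda_k$. Composing, the net phase accumulated is, to leading order, $\frac1\eps\int_0^s\bigl(\lambda_j(x_j(\sigma))-\lambda_k(x_k(\sigma))\bigr)\,d\sigma$ plus quadratic corrections, and the spatial centers separate.

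The key mechanism is then a non-stationary phase argument in the time variable $s$. Writing $U_k^\eps(-s)U_j^\eps(s)f = e^{i\Phi^\eps(s)/\eps} g^\eps(s)$ with $g^\eps(s)$ bounded in $\Sigma_\eps^p$ (exponentially in $s$) and $\partial_s\Phi^\eps(s)$ bounded below in absolute value by a positive quantity controlled by $c_0\langle x\rangle^{-n_0}$-type estimates — here is precisely where \eqref{gapcondition} enters, giving $|\lambda_j-\lambda_k|\gtrsim \langle x\rangle^{-n_0}$ and hence, along the exponentially-bounded trajectories, a lower bound degrading like $e^{-Cn_0 t}$ but never vanishing — one integrates by parts once in $s$:
\[
\frac{1}{i\eps}\int_0^t U_k^\eps(-s)U_j^\eps(s)f\,ds
= \frac{1}{i\eps}\int_0^t \frac{\eps}{i\partial_s\Phi^\eps(s)}\,\partial_s\!\bigl(e^{i\Phi^\eps(s)/\eps}\bigr)g^\eps(s)\,ds,
\]
and the $\eps^{-1}$ is cancelled by the $\eps$ produced. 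The boundary terms and the term where $\partial_s$ falls on $g^\eps(s)/\partial_s\Phi^\eps(s)$ are then $\mathcal O(e^{Ct})$ in $\Sigma_\eps^p$, at the cost of controlling $3$ extra derivatives and $n_0$-type weights on $f$ — which is exactly the loss $\Sigma_\eps^{(p+3)n_0+p+2}\to\Sigma_\eps^p$ recorded in the statement (the $(p+3)n_0$ accounting for differentiating the $\langle x\rangle^{-n_0}$-type denominator $p+3$ times and the polynomial weights it carries, and the $+2$ for the extra $x$- and $\eps\partial_x$-derivatives needed to run the wave-packet expansion and the integration by parts).

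The main obstacle, and where I would spend the most care, is making the phase/profile decomposition $U_k^\eps(-s)U_j^\eps(s)f = e^{i\Phi^\eps(s)/\eps}g^\eps(s)$ rigorous with \emph{uniform-in-$\eps$} control of $g^\eps$ in the right $\Sigma_\eps$-norm and with a clean lower bound on $\partial_s\Phi^\eps$. Two points are delicate: (i) the potentials $\lambda_j$ are only at most quadratic, so the flows and the metaplectic/Hagedorn corrections grow exponentially in time, forcing all constants to be of the form $Ce^{Ct}$ and requiring the Gronwall-type bookkeeping to close — this is why the statement is local in time, $t\in[0,T]$, with a constant $C=C(T)$; (ii) the lower bound on $\partial_s\Phi^\eps$ must be extracted from \eqref{gapcondition} along the (distinct, generically) trajectories $x_j(\sigma),x_k(\sigma)$, and one must check that even when the spatial centers momentarily coincide the \emph{phase derivative} $\lambda_j(x_j(\sigma))-\lambda_k(x_k(\sigma))$ does not vanish, which again is guaranteed by the gap condition since $|\lambda_j(x)-\lambda_k(x)|\ge c_0\langle x\rangle^{-n_0}>0$ pointwise. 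An alternative, perhaps cleaner, route avoiding any explicit WKB construction is to differentiate the operator identity directly: set $A^\eps(t)=\frac{1}{i\eps}\int_0^t U_k^\eps(-s)U_j^\eps(s)\,ds$ and note $U_k^\eps(s)$ conjugates $x$ into $x+\mathcal O(s)$-type Heisenberg observables, so that $U_k^\eps(-s)\bigl(\lambda_k(x)-\lambda_j(x)\bigr)U_k^\eps(s)$ is, modulo commutators that are $\mathcal O(\eps)$, a function bounded below by $c_0\langle x\rangle^{-n_0}$ in the functional-calculus sense; inverting this elliptic-from-below operator (with the attendant loss of $n_0$-weighted derivatives) and integrating by parts once in $s$ gives the result. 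I would present whichever of these two is shorter, but in either case the crux is the same: convert the time integral of an oscillatory operator into a gain of one power of $\eps$ via the gap-controlled phase, paying with $(p+3)n_0+p+2-p$ orders of regularity and weight.
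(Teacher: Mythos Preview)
Your main WKB/wave-packet route does not prove the lemma as stated. The lemma is an \emph{operator bound} on all of $\Sigma_\eps^{(p+3)n_0+p+2}$, not a bound on the image of a single coherent state; the decomposition $U_k^\eps(-s)U_j^\eps(s)f=e^{i\Phi^\eps(s)/\eps}g^\eps(s)$ with a scalar phase $\Phi^\eps(s)$ simply does not exist for a generic $f\in\Sigma_\eps^q$. Even if you retreat to ``only the application matters'', the phase you write down, $\lambda_j(x_j(\sigma))-\lambda_k(x_k(\sigma))$ with \emph{two different} trajectories, is not the right object: after $U_j^\eps(s)$ transports the packet to $x_j(s)$, the backward flow $U_k^\eps(-s)$ does \emph{not} send it along the $\lambda_k$-trajectory issued from the original point, so your lower bound on $\partial_s\Phi^\eps$ is not justified by the pointwise gap \eqref{gapcondition} at a single $x$. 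Making this precise would require a genuine two-flow Egorov/FIO analysis, which is far heavier than what is needed.

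Your alternative sketch is much closer in spirit, but still more complicated than necessary: there is no need to conjugate $x$ by the Heisenberg evolution or to invoke ``elliptic-from-below'' inversion. The paper's argument is a one-line operator integration by parts. From
\[
i\eps\,\partial_s\bigl(U_k^\eps(-s)U_j^\eps(s)\bigr)=U_k^\eps(-s)\bigl(\lambda_j(x)-\lambda_k(x)\bigr)U_j^\eps(s),
\]
insert $U_k^\eps(s)U_k^\eps(-s)={\rm Id}$ to write
\[
U_k^\eps(-s)U_j^\eps(s)=i\eps\;U_k^\eps(-s)\gamma_{j,k}(x)U_k^\eps(s)\;\partial_s\bigl(U_k^\eps(-s)U_j^\eps(s)\bigr),\qquad \gamma_{j,k}=(\lambda_j-\lambda_k)^{-1},
\]
and integrate by parts in $s$. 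The boundary term is $U_k^\eps(-s)\gamma_{j,k}U_j^\eps(s)\big|_0^t$, controlled in $\Sigma_\eps^p$ by the growth of $\gamma_{j,k}$ (Lemma~\ref{lem:gap}) and \eqref{est:USigma}. The remaining integrand is $\partial_s\bigl(U_k^\eps(-s)\gamma_{j,k}U_k^\eps(s)\bigr)=\frac{1}{i\eps}U_k^\eps(-s)\bigl[-\tfrac{\eps^2}{2}\partial_x^2,\gamma_{j,k}\bigr]U_k^\eps(s)$, and the commutator is just $i\gamma_{j,k}'(x)\,\eps\partial_x+i\eps\gamma_{j,k}''(x)$ --- no $1/\eps$ survives. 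The loss $\Sigma_\eps^{(p+3)n_0+p+2}\to\Sigma_\eps^p$ comes entirely from the polynomial growth of $\gamma_{j,k}$, $\gamma_{j,k}'$, $\gamma_{j,k}''$ and from the single $\eps\partial_x$, together with the fact that $U_k^\eps$, $U_j^\eps$ are bounded on every $\Sigma_\eps^q$ with exponential-in-time constants. No classical trajectories, no phase functions, no Egorov.
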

 
 \begin{proof}
 We first  observe that
 \begin{equation}\label{eq:partialtUU}
i\eps\partial_t\left(
  U_k^\eps(-t)U_j^\eps(t)\right)=U^\eps_k(-t)\left(
  \lambda_j(x)-\lambda_k(x)\right)U^\eps_j(t). 
\end{equation}
 Indeed, if $f\in L^2(\R)$ and $f^\eps(t)= U_k^\eps(-t)U_j^\eps(t)f.$
 We have 
 \begin{eqnarray*}
 i\eps\partial_t f^\eps(t,x) & = &- \left( -{\eps^2\over 2} \d_x^2 
   +\lambda_k(x)\right)f^\eps(t) + U^\eps_k(-t)\left( -{\eps^2\over 2}
   \d_x^2  +\lambda_j(x)\right)U^\eps_j(t)f\\ 
 & = & U^\eps_k(-t)\left( \lambda_j(x)-\lambda_k(x)\right)U^\eps_j(t)f
 \end{eqnarray*}
because $U^\eps_k(-t)$ commutes with $ -{\eps^2\over 2} \d_x^2 
   +\lambda_k(x)$.
We use Equation~\eqref{eq:partialtUU} to perform an integration by parts:
\begin{eqnarray*}
U_k^\eps(-t)U_j^\eps(t) & = &
U_k^\eps(-t)\left(\lambda_j-\lambda_k\right)^{-1} U_k^\eps(t) U_k^\eps
(-t)\left(\lambda_j-\lambda_k\right)U_j^\eps(t)\\ 
& = & i\eps \,U_k^\eps(-t)\left(\lambda_j-\lambda_k\right)^{-1}
U_k^\eps(t)  \;\partial_t\left(U_k^\eps(-t)U_j^\eps(t) \right). 
\end{eqnarray*}
Therefore,
\begin{eqnarray*}
{1\over i \eps}\int_0^t U_k^\eps(-s) U_j^\eps(s) ds & = &
\left[U_k^\eps(-s)\left(\lambda_j-\lambda_k\right)^{-1}
  U_j^\eps(s)\right]_0^t \\ 
& & -
\int_0^t \partial_s\left(U_k^\eps(-s)\left(\lambda_j-\lambda_k\right)^{-1}
  U_k^\eps(s) \right) U^\eps_k(-s)U^\eps_j(s)\,ds. 
\end{eqnarray*}
Set 
\begin{equation}\label{def:gamma}
\gamma_{j,k}=\left(\lambda_k-\lambda_j\right)^{-1}.
\end{equation}
The behavior as $x$ goes to infinity of these functions is studied in
Appendix~\ref{sec:growthcond} (see Lemma~\ref{lem:gap}). It is proven
there that for all $\beta\in\N$, 
$$\left| \partial_x^\beta \gamma_{j,k}(x) \right|  \lesssim
\<x\>^{n_0+|\beta|(1+n_0)}.$$ 
Since the propagators $U^\eps_k(t)$ and $U^\eps_j(t)$ map continuously
$\Sigma_\eps^p$ into itself uniformly with respect to $\eps$,  we have    
$$\left\| \left[U_k^\eps(-s)\gamma_{j,k}U_j^\eps(s)\right]_0^t
\right\|_{{\mathcal
    L}\(\Sigma_\eps^{(p+1)n_0+p},\Sigma_\eps^p\)}\lesssim C(p),$$ 
where in all this paragraph,  $C(p)$ denotes a generic constant
depending only on the parameter $p\in\N$. 
Besides, we observe that
$$\partial_s\left(U_k^\eps(-s)\gamma_{j,k}U_k^\eps(s) \right)= {1\over
  i\eps} U_k^\eps(-s)\left[-{\eps^2\over 2}
  \d_x^2+\lambda_k\;,\;\gamma_{j,k}\right]U_k^\eps(s) .$$ 
In view of 
$${1\over i\eps} \left[-{\eps^2\over 2}
  \d_x^2+\lambda_k\;,\;\gamma_{j,k}\right] =  
{1\over i\eps} \left[-{\eps^2\over 2} \d_x^2\;,\;\gamma_{j,k}\right]=
i \gamma_{j,k}'(x) \eps\partial_x +i \eps \gamma_{j,k}''(x), $$
and of 
\begin{align*}
&\left\| U_k^\eps(-s)\gamma_{j,k}'(x) \eps\partial_x
  U_j^\eps(s)\right\|_{{\mathcal
    L}\(\Sigma_\eps^{(p+3)n_0+p+2},\Sigma_\eps^p\)}\\
 +&\left\|
  U_k^\eps(-s)\gamma_{j,k}''(x) U_j^\eps(s)\right\|_{{\mathcal
    L}\(\Sigma_\eps^{(p+2)n_0+p+2},\Sigma_\eps^p\)}
  \lesssim  {\rm e}^{Cs}, 
\end{align*}
which comes from~\eqref{est:USigma} and Lemma~\ref{lem:gap}, 
we get 
$$\left\|  \partial_s\left(U_k^\eps(-s)\gamma_{j,k}U_k^\eps(s) \right)
\right\|_{{\mathcal L}(\Sigma_\eps^{p+2},\Sigma_\eps^p)}\lesssim  {\rm
  e}^{Ct},$$ 
which concludes the proof. 
 \end{proof}
 
 \smallbreak

 We now prove the following proposition.
 
 \begin{proposition}\label{prop:boundgk} For $p\in\N$, there exists
   $C(p)$ such that for 
   all $j\ge 2$, and all $\ell\in \{1,\dots,d_j\}$,
 $$\|g^\eps_{j,\ell}(t)\|_{\Sigma_\eps^p}\lesssim {\rm
   e}^{C(p)t},\quad \forall t\ge 0,$$ 
where $g^\eps_{j,\ell}$ is defined in \eqref{eq:gk}. 
 \end{proposition}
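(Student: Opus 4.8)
The plan is to solve the Duhamel formula for $g^\eps_{j,\ell}$ and exploit the averaging Lemma~\ref{lem:orth} to gain the crucial power of $\eps$ that compensates the loss coming from the $\eps^{-1}$-type scaling. Writing $U^\eps_j(t)$ for the propagator of $-\frac{\eps^2}{2}\d_x^2+\lambda_j(x)$, equation~\eqref{eq:gk} with zero initial datum gives
\begin{equation*}
g^\eps_{j,\ell}(t)=\frac{1}{i\eps}\int_0^t U^\eps_j(t-s)\bigl(\varphi^\eps(s)\,r_{j,\ell}(s)\bigr)\,ds .
\end{equation*}
Since $j\ge 2$, the profile $\varphi^\eps$ is essentially governed by the propagator $U^\eps_1$ associated with the \emph{first} mode, which is different from $U^\eps_j$; this is exactly the mismatch that Lemma~\ref{lem:orth} is designed to handle. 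So the first step is to write $\varphi^\eps(s)=U^\eps_1(s)\varphi^\eps(0)+(\text{lower order})$, or more precisely to replace $\varphi^\eps(s)$ by $U^\eps_1(s)$ applied to a time-dependent $\Sigma^p_\eps$-bounded family, using \eqref{eq:phieps}: the $\Tc_\eps$-evolution of $\varphi^\eps$ differs from $\lambda_1$-evolution by an $\O(\eps^{3/2})$ term, and the nonlinear term in \eqref{eq:phieps} carries an explicit $\eps^{3/2}$. This reduces the estimate to controlling $\frac{1}{i\eps}\int_0^t U^\eps_j(-s)U^\eps_1(s)(\cdots)\,ds$ after factoring out the common $U^\eps_j(t)$ in front.

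The second step is then the direct application of Lemma~\ref{lem:orth} with the pair $(k,j)\mapsto(j,1)$, $j\ne 1$: it yields that $\frac{1}{i\eps}\int_0^t U^\eps_j(-s)U^\eps_1(s)\,ds$ maps $\Sigma_\eps^{(p+3)n_0+p+2}$ into $\Sigma_\eps^p$ with operator norm $\le C e^{Ct}$. The source term, once written in the form $U^\eps_1(s)h^\eps(s)$, requires $h^\eps(s)$ to be bounded in $\Sigma^{(p+3)n_0+p+2}_\eps$; this follows from \eqref{eq:phiSigma} for $\varphi^\eps$ combined with Corollary~\ref{cor:rk}, which controls $r_{j,\ell}$ and all its $x$-derivatives by $e^{Ct}\langle x\rangle^{(1+p+k)(1+n_0)}$ — the polynomial weights there are absorbed by the rapid decay of $\varphi^\eps$ (it is $u$ at scale $\sqrt\eps$ with $u\in\Sch$, so $x^\alpha\varphi^\eps$ is still $\Sigma^p_\eps$-bounded with exponential-in-time constants after using \eqref{expgrowth} for $x(t)$). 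Then $U^\eps_j(t)$ in front contributes only another $e^{C(p)|t|}$ by \eqref{est:USigma}, and one concludes $\|g^\eps_{j,\ell}(t)\|_{\Sigma^p_\eps}\lesssim e^{C(p)t}$.

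The main obstacle is the bookkeeping in the first step: making rigorous the claim ``$\varphi^\eps$ evolves like $U^\eps_1$'' requires re-deriving a Duhamel identity for $\varphi^\eps r_{j,\ell}$ in which the error terms (the quadratic Taylor remainder $\lambda_1-\Tc_\eps$ acting on $\varphi^\eps$, the $\eps^{3/2}$ nonlinearity, and the time-derivative hitting $r_{j,\ell}$ rather than $\varphi^\eps$) are each shown to be $\O(\eps^{1/2}e^{Ct})$ or better in the relevant $\Sigma^{p'}_\eps$-norm before the $\frac1\eps$ in front is taken into account — so that after multiplication by $\frac1\eps$ they are still $\O(e^{Ct})$ and need no averaging. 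A clean way to organize this, which I would follow, is: first prove the bound without worrying about which propagator appears (getting a crude $\eps^{-1}$ loss from the naive Duhamel estimate is useless), then insert $U^\eps_1(-s)U^\eps_1(s)=\Id$ inside the integral to expose the oscillatory pair $U^\eps_j(-s)U^\eps_1(s)$, integrate by parts in $s$ as in Lemma~\ref{lem:orth} using that $\lambda_j-\lambda_1$ is invertible with controlled derivatives (Lemma~\ref{lem:gap}), and track the weights $n_0$ carefully so the loss of derivatives stays within $\Sigma^{(p+3)n_0+p+2}_\eps\hookrightarrow\Sigma^p_\eps$. The nonlinear term deserves a separate remark: because it is cubic in $\varphi^\eps$ and prefactored by $\eps^{3/2}$, it contributes at worst $\eps^{1/2}$ after the global $\eps^{-1}$, with a time constant governed by $\|\varphi^\eps\|_{L^\infty}^2\lesssim\eps^{-1/2}e^{Ct}$ from \eqref{Linftybound}, so it is in fact $\O(e^{Ct})$ and harmless.
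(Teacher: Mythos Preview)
Your plan is the paper's: Duhamel for $g^\eps_{j,\ell}$, rewrite $\tilde\varphi^\eps_{j,\ell}:=\varphi^\eps r_{j,\ell}$ by Duhamel with respect to $U^\eps_1$, and invoke Lemma~\ref{lem:orth}. The one point that needs correction is the claim that the error terms ``need no averaging''. The source in the equation $(i\eps\d_t+\tfrac{\eps^2}{2}\d_x^2-\lambda_1)\tilde\varphi^\eps_{j,\ell}$ (your list should also include the commutator $\tfrac{\eps^2}{2}[\d_x^2,r_{j,\ell}]\varphi^\eps$) is written in the paper as $\eps\tilde r^\eps$ with $\|\tilde r^\eps\|_{\Sigma^q_\eps}\lesssim e^{Ct}$; these terms are $\O(\eps e^{Ct})$, not just $\O(\sqrt\eps e^{Ct})$, and the full factor $\eps$ is needed. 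Substituting
$\tilde\varphi^\eps_{j,\ell}(s)=U^\eps_1(s)\tilde\varphi^\eps_{j,\ell}(0)-i\int_0^s U^\eps_1(s-\tau)\tilde r^\eps(\tau)\,d\tau$
into the Duhamel for $g^\eps_{j,\ell}$ and swapping the integrals gives
\[
g^\eps_{j,\ell}(t)=\frac{1}{i\eps}\int_0^t U^\eps_j(t-s)U^\eps_1(s)\,ds\;\tilde\varphi^\eps_{j,\ell}(0)\;-\;\int_0^t\Bigl[\tfrac{1}{\eps}\int_\tau^t U^\eps_j(t-s)U^\eps_1(s-\tau)\,ds\Bigr]\tilde r^\eps(\tau)\,d\tau,
\]
and Lemma~\ref{lem:orth} is applied to \emph{both} bracketed operators. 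There are two factors of $\eps^{-1}$ in play (one from each Duhamel), while the errors carry only one $\eps$; estimating the second piece without averaging would give $\eps^{-1}te^{Ct}$, which is fatal. In your integration-by-parts reformulation this step is hidden rather than avoided: setting $h(s)=U^\eps_1(-s)\tilde\varphi^\eps_{j,\ell}(s)$, the errors enter through $h'(s)=\tfrac{1}{i\eps}U^\eps_1(-s)\bigl(\eps\tilde r^\eps(s)\bigr)=\O(e^{Cs})$, and the integration by parts that produces the $h'$ contribution \emph{is} the averaging for that term.
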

 
 \begin{proof} We use Duhamel's formula and write 
 $$g^\eps_{j,\ell}(t)={1\over i\eps}\int_0^t
 U_j^\eps(t-s)\left(\varphi^\eps(s)r_{j,\ell}(s)\right) ds.$$ 
 Besides,
if $\tilde
\varphi^\eps_{j,\ell}(t,x)=\varphi^\eps(t,x)r_{j,\ell}(t,x)$, then we
have, 
$$
\left(i\eps\partial_t +{\eps^2\over 2}\d_x^2 
  -\lambda_1(x)\right)\tilde \varphi^\eps_{j,\ell}  
 =  \underbrace{i\eps\partial_t r_{j,\ell}\varphi^\eps
+  r_{j,\ell} \eps^{3/2} |\varphi^\eps|^{2}\varphi^\eps+ {\eps^2\over
  2} \left[\d_x^2 , r_{j,\ell}(t,x)\right]\varphi^\eps}_{ =:   \eps
\widetilde r^\eps(t,x)}. 
$$
 Therefore, we can write 
 $$\tilde\varphi^\eps_{j,\ell}(t)=U^\eps_1(t)
 \tilde\varphi^\eps_{j,\ell}(0)-i\int_0^t U^\eps_1(t-s)\widetilde
 r^\eps(s)ds,$$ 
whence
\begin{align*}
g^\eps_{j,\ell}(t) & ={ 1\over i\eps}\int_0^t U_j^\eps(t-s)U_1^\eps(s)ds
\tilde\varphi^\eps_{j,\ell}(0)- {1\over \eps}\int_0^t 
\int_0^sU_j^\eps(t-s)U_1^\eps(s-\tau)\widetilde r^\eps(\tau)d\tau ds\\ 
& = { 1\over i\eps}\int_0^t U_j^\eps(t-s)U_1^\eps(s)ds
\tilde\varphi^\eps_{j,\ell}(0)- \int_0^t\left[{1\over
    \eps}\int_\tau^t U_j^\eps(t-s) 
U^\eps_1(s-\tau)ds\right]\widetilde r^\eps(\tau) d\tau.
\end{align*}
Lemma~\ref{lem:orth} yields 
\begin{equation*}
\|g^\eps_{j,\ell}(t)\|_{\Sigma_\eps^p}  \lesssim {\rm e}^{Ct}+
 \int_0^t {\rm e}^{C\tau } \| \widetilde r^\eps(\tau)\|_{\Sigma_\eps^q}d\tau,
 \end{equation*}
 with $q=p+2+(p+3)(1+n_0)$.
 Let us now study $\widetilde r^\eps$. We write 
 $\widetilde r^\eps=\widetilde r^\eps_1+\widetilde r^\eps_2$ with 
 $$\widetilde r^\eps_1(t,x)=i\partial_t r_{j,\ell}\varphi^\eps+ {\eps\over
   2} 
\left[\d_x^2 , r_{j,\ell}(t,x)\right]\varphi^\eps.$$
 In view of Corollary~\ref{cor:rk} and of~\eqref{eq:phiSigma}, 
 we have for all $q\in\N$,
$$\|\widetilde r^\eps_1(t)\|_{\Sigma^q_\eps(\R)}\lesssim {\rm e}^{C(q)t}. $$
A very rough estimate yields
\begin{align*}
  \|\widetilde r^\eps_2(t)\|_{\Sigma^q_\eps}&=
\|\sqrt\eps
r_{j,\ell}|\varphi^\eps|^2\varphi^\eps\|_{\Sigma^q_\eps}\lesssim
\sqrt\eps \left\lVert r_{j,\ell}\<x\>^q
  \varphi^\eps\right\rVert_{\Sigma^q_\eps}\left\lVert 
  \<\eps \d_x\>^q \varphi^\eps\right\rVert_{L^\infty}^2 \\
&\lesssim
\sqrt\eps {\rm e}^{Ct}\left\lVert \<x\>^{q+(1+q)(n_0+1)}
  \varphi^\eps\right\rVert_{\Sigma^q_\eps}\left\lVert 
  \<\eps \d_x\>^q \varphi^\eps\right\rVert_{L^\infty}^2,
\end{align*}
where we have used Corollary~\ref{cor:rk}. Now with \eqref{Linftybound} and
\eqref{eq:phiSigma}, we conclude
\begin{equation*}
  \|\widetilde r^\eps_2(t)\|_{\Sigma^q_\eps} \lesssim {\rm e}^{Ct}.
\end{equation*}
This completes the proof of Proposition~\ref{prop:boundgk}.
\end{proof}

 
\section{Consistency}\label{sec:consistency}
 
We now prove Theorem~\ref{theo:matrix}. We go back to
Equation~\eqref{eq:weps}, that we recall: 
$$
 \left\{ 
   \begin{aligned}
      i\eps\partial_t \theta^\eps(t,x) +{\eps^2\over 2} \d_x^2   \theta^\eps(t,x)
      &=V(x) \theta^\eps(t,x)+\eps NL^\eps (t,x)+\eps L^\eps(t,x), 
 \\
 \theta^\eps_{\mid t=0}&=r_0^\eps,
   \end{aligned}
\right.
$$
where $NL^\eps$ and $L^\eps$ are defined in~\eqref{def:NLeps}
and~\eqref{def:Leps}, respectively.  
The standard $L^2$-estimate yields:
$$
\|\theta^\eps(t)\|_{L^2}\le \|r_0^\eps\|_{L^2} +\int_0^t
\(\|NL^\eps(s)\|_{L^2}+\|L^\eps(s)\|_{L^2}\)ds.$$ 
 In view of~\eqref{def:Leps}, Proposition~\ref{prop:timedepeigenbis}
 and Proposition~\ref{prop:boundgk}, we have  
$$\|L^\eps(t)\|_{L^2}\lesssim \sqrt\eps {\rm e}^{Ct}.$$
Besides, we observe
\begin{align*}
\|NL^\eps(t)\|_{L^2} & \lesssim  \sqrt\eps \left\| \left(
    |\varphi^\eps(t)|^2+|\theta^\eps(t)|_{\C^N}^2+\eps^2
    |g^\eps(t)|_{\C^N}^2 \right)
\(\theta^\eps(t)-\eps g^\eps(t)\)\right\|_{L^2}\\  
 \lesssim  \sqrt\eps &\left(\|\varphi^\eps
  (t)\|^2_{L^\infty}+\|\theta^\eps(t)\|^2_{L^\infty}+
\eps^2\|g^\eps(t)\|_{L^\infty}^2\right)  
\(\|\theta^\eps(t)\|_{L^2}+\eps\|g^\eps(t)\|_{L^2}\).
\end{align*}
In view of~\eqref{Linftybound}, we have
$\|\varphi^\eps(t)\|_{L^\infty}\lesssim \eps^{-1/4}{\rm
  e}^{Ct}$. On the other hand, Proposition~\ref{prop:boundgk} implies,
in view of the 
Gagliardo-Nirenberg inequality
\begin{equation}
  \label{eq:GN}
  \|f\|_{L^\infty} \lesssim
\eps^{-1/2}\|f\|_{L^2}^{1/2}
\|\eps\partial_xf \|_{L^2}^{1/2},
\end{equation}
the estimate
\begin{equation*}
 \eps^2\|g^\eps(t)\|_{L^\infty}^2 \lesssim \eps  {\rm
  e}^{Ct}.
\end{equation*}
Therefore, it is natural to perform a bootstrap argument
assuming, say 
\begin{equation}\label{bootstrap}
\|\theta^\eps(t)\|_{L^\infty}\le \eps^{-1/4}{\rm e}^{Ct}.
\end{equation}
Note that we fixed the value of the constant in factor of the
right hand side equal to one. We did so because $\theta^\eps$, as an
error term, is expected to be  smaller than $\varphi^\eps$ (the approximate
solution) in the limit $\eps\to 0$. 
As long as~\eqref{bootstrap} holds, the $L^2$-estimate implies, in
view of \eqref{eq:erreur-init}
$$\|\theta^\eps(t)\|_{L^2}\lesssim \eps^\kappa+ 
\int_0^t \(\sqrt\eps {\rm e}^{Cs} +{\rm
  e}^{Cs}\|\theta^\eps(s)\|_{L^2} \)ds .$$ 
By Gronwall Lemma, we obtain 
\begin{equation}\label{firstestapriori}
\|\theta^\eps(t)\|_{L^2}\le C\(\eps^\kappa+\sqrt\eps\) {\rm e}^{{\rm e}^{Ct}}.
\end{equation}
It remains to check how long  the bootstrap
assumption~\eqref{bootstrap} holds. For this, we use
Gagliardo-Nirenberg inequality \eqref{eq:GN}, 
and we look for a control of the norm of $\theta^\eps(t)$ in
$\Sigma^1_\eps$. Differentiating the system~\eqref{eq:weps} with
respect to $x$, we find
\begin{equation*}
  i\eps\partial_t(\eps\partial_x \theta^\eps) +{\eps^2\over 2}
  \d_x^2 (\eps\partial_x  \theta^\eps) 
      =V(x)\eps\partial_x \theta^\eps+ \eps
      V'(x)\theta^\eps+\eps^2\partial_x NL^\eps +\eps^2\partial_x
      L^\eps,  
\end{equation*}
We observe that since $V$ is at most quadratic,
$|V'(x)\theta^\eps|_{\C^N}\lesssim \<x\>|\theta^\eps
|_{\C^N}$. Therefore, in order to obtain a closed system of estimates,
we consider the equation satisfied by $x\theta^\eps$: multiply
\eqref{eq:weps} by $x$,
\begin{equation*}
i\eps\partial_t (x\theta^\eps) +{\eps^2\over 2} \d_x^2  (x \theta^\eps)
      =V(x)(x \theta^\eps)+\eps^2\d_x\theta^\eps +\eps xNL^\eps
      +\eps x L^\eps .
\end{equation*}
By Proposition~\ref{prop:boundgk}, we have 
$$\|xL^\eps(t)\|_{L^2}+\|\eps\partial_x L^\eps(t)\|_{L^2}\lesssim
\sqrt\eps {\rm e}^{Ct}.$$ 
Besides, 
\begin{align*}
|xNL^\eps(t,x)|_{\C^N} & \lesssim
\left(|\phi^\eps(t,x)|^2+|\theta^\eps(t,x)|_{\C^N}^2+\eps^2
  |g^\eps(t,x)|_{\C^N}^2\right)\times \\
&\qquad \times
\(|x\theta^\eps(t,x)|_{\C^N}+\eps |x g^\eps(t,x)|_{\C^N}\),\\ 
|\eps\partial_xNL^\eps(t,x)|_{\C^N} & \lesssim
\left(|\phi^\eps(t,x)|^2+|\theta^\eps(t,x)|_{\C^N}^2+\eps^2
  |g^\eps(t,x)|_{\C^N}^2\right)
|\eps\partial_x\theta^\eps(t,x)|_{\C^N}\\
&\quad +\eps\left(|\phi^\eps(t,x)|^2+|\theta^\eps(t,x)|_{\C^N}^2+\eps^2
  |g^\eps(t,x)|_{\C^N}^2\right)
|\eps\partial_xg^\eps(t,x)|_{\C^N}\\
&\quad 
+\lvert\eps\partial_x\phi^\eps(t,x)\rvert
\times\lvert \phi^\eps(t,x)\rvert
\times\lvert\theta^\eps(t,x)\rvert_{\C^N}\\
& \quad + \eps \lvert \phi^\eps(t,x)\rvert^2
\times |\partial_x\chi^1(t,x)|_{\C^N}\times\lvert\theta^\eps(t,x)\rvert_{\C^N}. 
\end{align*}
Arguing as before and  using again~\eqref{Linftybound}, we obtain that
under~\eqref{assumption} we have  
$$\|\eps\partial_x\theta^\eps(t)\|_{L^2}+\|x\theta^\eps(t)\|_{L^2}\lesssim
\(\eps^\kappa+\sqrt\eps\){\rm e}^{{\rm e}^{Ct}}.$$ 
Gagliardo--Nirenberg inequality then implies
$$\|\theta^\eps(t)\|_{L^\infty}\lesssim \eps^{-1/2}\(\eps^\kappa+\sqrt\eps\) {\rm
  e}^{{\rm e}^{Ct}}.$$ 
We infer that \eqref{bootstrap} holds (at least) as long as
\begin{equation*}
  \(\eps^{\kappa-1/2}+1\) {\rm
  e}^{{\rm e}^{Ct}}\ll \eps^{-1/4}{\rm e}^{Ct},
\end{equation*}
which is ensured provided that
 $t\le C {\log}{\log}\left({1\over\eps}\right)$, for some suitable
 constant $C$, since $\kappa>1/4$. This concludes the bootstrap
 argument: we infer 
$$\sup_{|t|\le C{\log}{\log}\left(\frac{1}{\eps}\right)}
\(\|\theta^\eps(t)\|_{L^2}+\|x \theta^\eps(t)\|_{L^2}+ \|\eps\d_x
\theta^\eps(t)\|_{L^2}\)\Tend\eps 0 0 .$$ 
Theorem~\ref{theo:matrix} then follows from the above asymptotics,
 together with the relation  $\theta^\eps = w^\eps +\eps g^\eps$, and
 Proposition~\ref{prop:boundgk}.  

 \begin{remark}\label{rem:whylog}
   In the case where $V^\eps = D+\eps W$, as in Remark~\ref{rem:log},
   the proof can be adapted, in order to reproduce the argument given
   in \cite{CF-p}. The main point to notice is that (local in time)
   Strichartz estimates are available for the propagator associated to
   $-\frac{\eps^2}{2}\d_x^2+D(x)$, thanks to \cite{Fujiwara}. Then in the
   presence of the power $\eps$ in front of $W$, the potential $\eps
   W$ can be considered as a source term in the error estimates: the
   factor $\eps$ is crucial to avoid a singular power of $\eps$ due to
   the presence of $\eps$ in front of the time derivative in
   \eqref{eq:weps}. The proof in \cite[Section~6]{CF-p} for the cubic,
   one-dimensional Schr\"odinger equation can be reproduced: another 
   bootstrap argument can be invoked, which does not involve
   Gagliardo--Nirenberg inequalities, since a useful \emph{a priori}
   estimate for the envelope $u$ is available. 
 \end{remark}

\section{Superposition}\label{sec:sup}

As explained in the introduction, the only difficulty in the proof of
Theorem~\ref{theo:superposition1} is to treat a nonlinear
interaction term. Indeed, we set 
$$w^\eps=\psi^\eps-\varphi^\eps_1\chi^1-\varphi^\eps_2\chi^2+\eps g^\eps$$
where $g^\eps$ is the sum of two correction terms, similar to the one
introduced in \S\ref{sec:strategy}. More precisely, set $p(1)=1$, and 
$p(2)=1$ if $\tilde\lambda_1=\tilde\lambda_2$, $p(2)=2$ otherwise.
Define
$g^\eps=g^\eps_1+g^\eps_2$, with
\begin{align*}
g^\eps_1 & =  \sum_{1\le j\le P, \,j\not=p(1)} \,\sum_{1\le \ell\le d_j}
g^\eps_{j,1,\ell}(t,x)\chi_j^\ell(t,x),\\
g^\eps_2 & =    \sum_{1\le j\le P,\,j\not=p(2)} \,\sum_{1\le \ell\le d_j}
g^\eps_{j,2,\ell}(t,x)\chi_j^\ell(t,x) ,
\end{align*}
 where
 for $ k=\{1,2\}$, $j\not=p(k)$ and  $1\le \ell\le d_j$, the function
 $g_{j,k,\ell}^\eps(t,x)$ solves the scalar Schr\"odinger equation
  \begin{equation}\label{eq:gjk}
     i\eps\partial_t g_{j,k,\ell}^\eps +{\eps^2\over 2} \d_x^2 
     g_{j,k,\ell}^\eps-\lambda_j(x)g_{j,k,\ell}^\eps=\varphi^\eps
     r_{j,k,\ell}\quad ;\quad
  g_{j,k,\ell\mid t=0}^\eps=0,
  \end{equation}
  where 
  \begin{equation}\label{def:rjkl}
  r_{j,k,\ell}(t,x)=-i\left(\partial_t
    \chi^{k}(t,x)+\xi_{p(k)}(t)\partial_x\chi^{k}(t,x)\;,\;
    \chi_j^\ell(t,x)\right)_{\C^N}. 
  \end{equation}
 The function $w^\eps(t)$ then solves 
 \begin{equation*}
      i\eps\partial_t w^\eps +{\eps^2\over 2} \d_x^2   w^\eps
      =V(x) w^\eps+\eps NL^\eps +\eps L^\eps\quad
      ;\quad  w^\eps_{\mid t=0}=0,
 \end{equation*}
 with 
 \begin{equation*}
 L^\eps     = 
  \O(\sqrt\eps{\rm e}^{Ct})+\sum_{k=1,2}\sum_{{1\le j\le P}\atop{j\not=p(k)}}
  \,\sum_{1\le \ell\le
    d_j} \left[
    {\eps^2\over 2} \d_x^2 ,\chi_j^\ell\right] g^\eps_{j,k,\ell}
    =  \O(\sqrt\eps{\rm e}^{Ct}).
 \end{equation*}
Here, the $\O(\sqrt\eps{\rm e}^{Ct})$ holds in $\Sigma_\eps^1$, from
Proposition~\ref{prop:boundgk}.  
Besides, 
\begin{align*}
NL^\eps & =  \sqrt\eps \Bigl( \left| w^\eps +
  \varphi^\eps_1\chi^1+\varphi^\eps_2\chi^2+\eps
  g^\eps\right|^2\left(w^\eps +
  \varphi^\eps_1\chi^1+\varphi^\eps_2\chi^2+\eps g_\eps\right) \\
&  \qquad\qquad
 -  |\varphi^\eps_1|^2\varphi^\eps_1\chi^1
-|\varphi^\eps_2|^2\varphi^\eps_2\chi^2\Bigr) 
\end{align*}
Adding and subtracting the term $\sqrt\eps | 
  \varphi^\eps_1\chi^1+\varphi^\eps_2\chi^2|^2(
  \varphi^\eps_1\chi^1+\varphi^\eps_2\chi^2) $, we have
$$|NL^\eps|\le N^\eps_S+N^\eps_I ,$$
where we have the pointwise estimates
\begin{align*}
N^\eps_I & \lesssim  \sqrt\eps \left(
  |\varphi^\eps_1|^2|\varphi^\eps_2|+|\varphi^\eps_2|^2 
|\varphi^\eps_1|\right),\\ 
N^\eps_S & \lesssim  \sqrt\eps \left( |\varphi^\eps_1|^2+
|\varphi^\eps_2|^2+|w^\eps|^2+\eps^2|g^\eps|^2\right)
\left( |w^\eps|+\eps|g^\eps|\right).
\end{align*}
The semilinear term $N^\eps_S$ can be treated exactly in the same
manner as in Section~\ref{sec:consistency}. It remains to analyze
$\displaystyle{\int_0^t\left\|NL^\eps_I(s)\right\|_{\Sigma_\eps^1}ds.}$ 
We observe 
\begin{equation*}
  \sqrt\eps\int_0^t \left\| |\varphi^\eps_1(s)|^2
  \varphi^\eps_2(s)\right\|_{L^2} ds =
\int_0^t \left\| \left|
    u_1\left(s,y-{x_1(s)-x_2(s)\over\sqrt\eps}\right)\right|^2 
    u_2(s,y) \right\|_{L^2} ds,
\end{equation*}
and we note that 
the contribution of $|\varphi^\eps_1|^2\varphi^\eps_2$ and that of
$|\varphi^\eps_2|^2\varphi^\eps_1$ play the same role. Also, we leave
out the other terms which are needed in view of a $\Sigma_\eps^1$
estimate, since they create no trouble. 
 Arguing as in  
  \cite[Lemma~6.1]{CF-p}, we obtain:

\begin{lemma}
Let $T\in\R$, $0<\gamma<1/2$ and 
$$I^\eps(T)=\left\{ t\in[0,T],\;\;\left|x_1(t)-x_2(t)\right| \le
  \eps^\gamma\right\}.$$ 
Then, for all $k\in\N$, there exists a constant $C_k$ such that 
$$\int_0^T \| NL^\eps_I(t)\|_{\Sigma_\eps^1} dt \lesssim
(M_{k+2}(T))^3 \left( T\eps^{k(1/2-\gamma)}+|I^\eps(T)|\right){\rm
  e}^{C_kT},$$ 
with 
$$M_{k}(T)=\sup\left\{ \|\<x\>^\alpha\partial_x ^\beta
  u_j\|_{L^\infty([0,T],L^2(\R))};\ 
  j\in\{1,2\},\quad \alpha+\beta\le k\right\}.$$ 
\end{lemma}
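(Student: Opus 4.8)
The plan is to reduce the $\Sigma_\eps^1$-bound on $NL_I^\eps$ to an estimate for a double integral in which one envelope is evaluated at a shifted point, exactly in the spirit of \cite[Lemma~6.1]{CF-p}. First I would recall that, by \eqref{eq:phi} and \eqref{eq:GN}, all the $\Sigma_\eps^1$-norms of $\varphi_j^\eps$ and of their $\eps\d_x$, $x$ multiples are controlled by $L^2$-norms of $\<y\>^\alpha\d_y^\beta u_j$, uniformly in $\eps$, so that after a change of variables $y=(x-x_2(t))/\sqrt\eps$ the relevant quantity becomes
$$
\int_0^T \left\| \left\lvert u_1\!\left(t,y-\tfrac{x_1(t)-x_2(t)}{\sqrt\eps}\right)\right\rvert^2 u_2(t,y)\right\|_{\Sigma_1^1}\,dt
\lesssim (M_{k+2}(T))^3\int_0^T \left\langle \tfrac{x_1(t)-x_2(t)}{\sqrt\eps}\right\rangle^{-k}\! {\rm e}^{C_kt}\,dt,
$$
the gain of the negative power $\langle\cdot\rangle^{-k}$ coming from the fact that $u_1$ and $u_2$ are Schwartz with all moments bounded by $M_{k+2}(T){\rm e}^{C_kt}$ (Theorem~\ref{theo:profil}), so that the product of a function centred at $0$ and one centred at $(x_1(t)-x_2(t))/\sqrt\eps$ is small when these centres are far apart. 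The derivatives hitting the Gaussian-type phase factors $e^{i\xi_j(t)(x-x_j(t))/\eps}$ produce only factors $\xi_j(t)=\O({\rm e}^{Ct})$, which are absorbed into ${\rm e}^{C_kt}$; the $x$-weight produces at worst $\<x\>\lesssim\<x_2(t)\>\<y\sqrt\eps\>\lesssim {\rm e}^{Ct}\<y\>$, again harmless after raising $k$.

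The second step is to split $[0,T]=I^\eps(T)\cup([0,T]\setminus I^\eps(T))$ according to whether $|x_1(t)-x_2(t)|\le\eps^\gamma$ or not. On the ``far'' set one has $\langle (x_1(t)-x_2(t))/\sqrt\eps\rangle^{-k}\lesssim (\eps^{\gamma-1/2})^{k}=\eps^{k(\gamma-1/2)}$, which since $\gamma<1/2$ is a positive power; integrating against ${\rm e}^{C_kt}$ over $[0,T]$ gives the contribution $T\eps^{k(1/2-\gamma)}{\rm e}^{C_kT}$. On the ``near'' set $I^\eps(T)$ one simply bounds the integrand by its supremum, namely by $(M_{k+2}(T))^3{\rm e}^{C_kT}$ (taking $k=0$ there, or keeping $k$ and using $\langle\cdot\rangle^{-k}\le1$), and integrates over a set of measure $|I^\eps(T)|$, which yields the second term $|I^\eps(T)|{\rm e}^{C_kT}$. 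Adding the two contributions and multiplying through by the common prefactor $(M_{k+2}(T))^3$ gives precisely the claimed bound.

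The main technical obstacle is the bookkeeping of the $\Sigma_\eps^1$ norm rather than the plain $L^2$ norm: one must check that applying $x$ or $\eps\d_x$ to $|\varphi_1^\eps|^2\varphi_2^\eps$ (and, for the symmetric term, to $|\varphi_2^\eps|^2\varphi_1^\eps$) does not destroy the decay in $(x_1(t)-x_2(t))/\sqrt\eps$. This is where one uses that $\eps\d_x$ acting on $\varphi_j^\eps$ is, up to the harmless factor $i\xi_j(t)\varphi_j^\eps$, of the form $\eps^{-1/4}(\d_y u_j)(\cdot)e^{i(\cdots)/\eps}$, i.e.\ still a rescaled Schwartz profile with one more derivative, so the whole argument closes provided one works with $M_{k+2}$ instead of $M_k$; the extra two orders absorb one $\eps\d_x$ and one $x$-weight. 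The role of the exponential weight ${\rm e}^{C_kt}$ is only to collect the polynomial-in-$t$ growth coming from \eqref{expgrowth}, \eqref{boundedmoments} and the derivatives of $\xi_j$, and requires no further care. Once the lemma is in place, one concludes as in \cite[Section~6]{CF-p}: $\int_0^T|I^\eps(T)|$-type terms are shown to be $o(1)$ for $T\le C\log\log(1/\eps)$ using the lower bound $\Gamma>0$ to control the measure of the crossing set, and then a Gronwall/bootstrap argument identical to Section~\ref{sec:consistency} finishes the proof of Theorem~\ref{theo:superposition1}.
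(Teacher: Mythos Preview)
Your proposal is correct and follows exactly the approach the paper invokes by citing \cite[Lemma~6.1]{CF-p}: rescale to expose the shift $(x_1(t)-x_2(t))/\sqrt\eps$, use the moment bounds on the envelopes to gain a factor $\langle\cdot\rangle^{-k}$, then split $[0,T]$ into the near set $I^\eps(T)$ and its complement. Note only the harmless sign slip in your far-set estimate: you want $\langle (x_1-x_2)/\sqrt\eps\rangle^{-k}\lesssim(\eps^{\gamma-1/2})^{-k}=\eps^{k(1/2-\gamma)}$, not $(\eps^{\gamma-1/2})^{k}$, but your stated conclusion $T\eps^{k(1/2-\gamma)}{\rm e}^{C_kT}$ is already the correct one.
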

In view of this lemma and of Equation~\eqref{boundedmoments}, we obtain 
$$\int_0^T \| NL^\eps_I(t)\|_{\Sigma_\eps^1} dt \lesssim {\rm e}^{CT}
\left( T\eps^{k(1/2-\gamma)}+|I^\eps(T)|\right),$$ 
and the next lemma yields the conclusion. 

\begin{lemma} Set 
$$\Gamma=\inf_{x\in \R} \left\lvert
  \tilde\lambda_1(x)-\tilde\lambda_2(x)-(E_1-E_2)\right\rvert
,$$ 
and suppose $\Gamma>0$. Then for $0<\gamma<1/2$, there exists
$C_0,C_1>0$ such that  
$$|I^\eps(t)|\lesssim \eps^\gamma \Gamma^{-2} {\rm e}^{C_0 t},\quad 0\le
t\le C_1 {\rm log}\left({1\over \eps}\right).$$ 
\end{lemma}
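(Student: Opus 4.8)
The plan is to estimate the Lebesgue measure of the set $I^\eps(t)$ where the two classical trajectories $x_1$ and $x_2$ are $\eps^\gamma$-close, by relating proximity of the trajectories in physical space to proximity of the associated momenta and ultimately to a bound forced by conservation of energy. First I would recall that each pair $(x_k(s),\xi_k(s))$ conserves the classical energy $\frac12\xi_k(s)^2+\tilde\lambda_k(x_k(s))=E_k$, so that on $I^\eps(t)$, where $|x_1(s)-x_2(s)|\le\eps^\gamma$, a Taylor expansion of $\tilde\lambda_1$ and $\tilde\lambda_2$ (which are at most quadratic, hence have locally controlled derivatives — with the control growing at most exponentially in $s$ via \eqref{expgrowth}) gives
$$
\tfrac12\bigl(\xi_1(s)^2-\xi_2(s)^2\bigr) = \tilde\lambda_2(x_2(s))-\tilde\lambda_1(x_1(s))+E_1-E_2
= -\bigl(\tilde\lambda_1(x_1(s))-\tilde\lambda_2(x_1(s))-(E_1-E_2)\bigr)+\O\!\left(\eps^\gamma{\rm e}^{Cs}\right).
$$
By the hypothesis $\Gamma>0$, the first term on the right is bounded below in absolute value by $\Gamma$, so on $I^\eps(s)$ one has $|\xi_1(s)^2-\xi_2(s)^2|\gtrsim \Gamma$ as soon as $\eps^\gamma{\rm e}^{Cs}\ll\Gamma$, i.e. for $s\le C_1\log(1/\eps)$. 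Since $|\xi_1(s)|+|\xi_2(s)|\lesssim{\rm e}^{Cs}$, this forces $|\xi_1(s)-\xi_2(s)|\gtrsim\Gamma{\rm e}^{-Cs}$ on $I^\eps(s)$.

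Next I would turn this lower bound on the relative velocity into an upper bound on $|I^\eps(t)|$. Set $\delta(s)=x_1(s)-x_2(s)$, so $\dot\delta(s)=\xi_1(s)-\xi_2(s)$ by \eqref{eq:traj} (applied to each mode). The estimate above says that whenever $s\in I^\eps(t)$, the function $\delta$ moves with speed at least $c\,\Gamma{\rm e}^{-Cs}$; moreover $\ddot\delta(s)=-\tilde\lambda_1'(x_1(s))+\tilde\lambda_2'(x_2(s))$ is bounded by ${\rm e}^{Cs}$ on the time interval under consideration. On any maximal subinterval of $I^\eps(t)$, $\delta$ stays in an interval of length $2\eps^\gamma$; since its derivative has modulus $\gtrsim\Gamma{\rm e}^{-Ct}$ there and its second derivative is $\O({\rm e}^{Ct})$, $\dot\delta$ cannot change sign on a sub-subinterval longer than $\sim\Gamma{\rm e}^{-Ct}/{\rm e}^{Ct}=\Gamma{\rm e}^{-Ct}$, and on each such monotonicity interval the time spent in a window of width $2\eps^\gamma$ is at most $2\eps^\gamma/(c\Gamma{\rm e}^{-Ct})=C\eps^\gamma\Gamma^{-1}{\rm e}^{Ct}$. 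Covering $[0,t]$ by $\O({\rm e}^{Ct}/(\Gamma{\rm e}^{-Ct}))=\O(\Gamma^{-1}{\rm e}^{Ct})$ such monotonicity intervals (here I use again that $t\le C_1\log(1/\eps)$ and exponential bounds on the derivatives of $\xi_j$, so the total count stays of the stated order), I get $|I^\eps(t)|\lesssim \Gamma^{-1}{\rm e}^{Ct}\cdot\eps^\gamma\Gamma^{-1}{\rm e}^{Ct}=\eps^\gamma\Gamma^{-2}{\rm e}^{C_0t}$, which is the claimed bound.

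The main obstacle I anticipate is the bookkeeping of the exponential-in-time factors: all the derivative bounds on $x_k,\xi_k,\tilde\lambda_k$ degrade like ${\rm e}^{Cs}$, so one must be careful that, on the window $0\le t\le C_1\log(1/\eps)$, the product of the "number of monotonicity intervals" and the "time per interval" does not accumulate a power of $\eps$ worse than $\eps^\gamma$ nor an uncontrolled power of $\Gamma^{-1}$; choosing $C_1$ small enough (depending on the various $C$'s from \eqref{expgrowth} and from the at-most-quadratic bounds) so that $\eps^\gamma{\rm e}^{Ct}\le\Gamma/2$ throughout is what makes the lower bound on $|\dot\delta|$ effective. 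A secondary, more routine point is to justify the Taylor expansion step uniformly in $s$: this uses that $x_1(s)$ and $x_2(s)$ differ by at most $\eps^\gamma$ on $I^\eps(s)$ together with the bound $|\tilde\lambda_k''|\le C$ (at most quadratic) and $|x_k(s)|\lesssim{\rm e}^{Cs}$, exactly as in \cite[Lemma~6.1]{CF-p}.
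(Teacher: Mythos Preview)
Your proposal is correct and follows essentially the same route as the paper: both use energy conservation together with the at-most-quadratic bound to show $|\dot\delta(s)|=|\xi_1(s)-\xi_2(s)|\gtrsim \Gamma\,{\rm e}^{-Cs}$ on $I^\eps$, then bound each maximal subinterval of $I^\eps$ by $\eps^\gamma\Gamma^{-1}{\rm e}^{Ct}$, and finally count these subintervals by showing the complementary ``gap'' intervals have length $\gtrsim \Gamma\,{\rm e}^{-2Ct}$ via the bound $|\ddot\delta|\lesssim{\rm e}^{Ct}$. The only differences are cosmetic (you phrase the counting via monotonicity intervals of $\delta$, the paper via complementary intervals of $I^\eps$) and there is a harmless arithmetic slip in your sketch ($\Gamma{\rm e}^{-Ct}/{\rm e}^{Ct}=\Gamma{\rm e}^{-2Ct}$, not $\Gamma{\rm e}^{-Ct}$), which does not affect the final estimate once absorbed into $C_0$.
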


\begin{proof}
Consider
 $J^\eps(t)$  an interval of maximal length included in $I^\eps(t)$, and
 $N^\eps(t)$ the number of such intervals. The result comes from the
 estimate
 $$|I^\eps(t)|\le N^\eps(t) \times \max |J^\eps(t)|, $$
 with 
\begin{equation}\label{J}
 |J^\eps(t)|  \lesssim  \eps^\gamma e^{Ct}
 \Gamma^{-1}\;\;{\rm and}\;\; 
 N^\eps(t)  \lesssim   te^{Ct} \Gamma^{-1}  ,
 \end{equation}
provided that $\eps^\gamma e^{Ct}\ll 1$. 
Let us prove the first property: consider 
$\tau,\sigma\in J^\eps(t)$. There exists  $t^*\in [\tau,\sigma]$ such that 
 $$\left|\left(x_1(\tau)-x_2(\tau)\right)
   -\left(x_1(\sigma)-x_2(\sigma)\right)\right|=
|\tau-\sigma\rvert \left\lvert\xi_1(t^*)-\xi_2(t^*)\right|,$$ 
whence
 $$ |\tau-\sigma|\le |\xi_1(t^*)-\xi_2(t^*)|^{-1} \times 2 \eps^\gamma.$$
 On the other hand,
 \begin{equation*}
  |\xi_1(t^*)-\xi_2(t^*)|\ge \left\lvert \lvert\xi_1(t^*)\rvert-\lvert
    \xi_2(t^*)\rvert\right\rvert
\ge \frac{\left| |\xi_1(t^*)|^2-
|\xi_2(t^*)|^2\right|}{|\xi_1(t^*)|+|\xi_2(t^*)|}.  
 \end{equation*}
We use 
\begin{align*} 
|\xi_1(t^*)|+|\xi_2(t^*)| & \lesssim  e^{Ct},\\
|\xi_1(t^*)|^2-|\xi_2(t^*)|^2 & = 
2\left(E_1-E_2-\tilde\lambda_1(x_1(t^*))+\tilde\lambda_2(x_2(t^*))\right),
\end{align*}
and infer
\begin{align*}
 \left\lvert
   E_1-E_2-\tilde\lambda_1(x_1(t^*))+\tilde\lambda_2(x_2(t^*))\right\rvert
 & \ge  \left\lvert
   E_1-E_2-\tilde\lambda_1(x_1(t^*))+\tilde\lambda_2(x_1(t^*))\right\rvert\\
&\quad - \left\lvert
   \tilde\lambda_2(x_1(t^*))+\tilde\lambda_2(x_2(t^*))\right\rvert\\
& \ge  \Gamma - C\eps^\gamma {\rm e}^{Ct},
\end{align*}
where we have used the fact that $\tilde\lambda_2$ is at most
quadratic. Therefore, if $\eps^\gamma {\rm e}^{Ct}$ is sufficiently
small,
\begin{equation*}
 \left\lvert
   E_1-E_2-\tilde\lambda_1(x_1(t^*))+\tilde\lambda_2(x_2(t^*))\right\rvert
 \ge \frac{\Gamma}{2}. 
\end{equation*}
We infer
 $$ |\tau-\sigma|  \lesssim \eps^\gamma{\rm e}^{Ct} \Gamma^{-1},$$
 provided $\eps^\gamma e^{Ct}\ll 1$.
 \smallbreak

Let us now consider $N^\eps(t)$. We use that as $t$ is large, $N^\eps(t)$ is
comparable to the number of distinct intervals of maximal size where
$|x_1(t)-x_2(t)|\ge \eps^\gamma$.  
More precisely, $N^\eps(t)$ is smaller  than $t$ divided by  the
minimal size of these intervals.  Therefore, we consider  one interval
$]\tau,\sigma[$  of this type and we look for lower bound of
$\sigma-\tau$. We have  
 $$|x_1(\tau)-x_2(\tau)|=|x_1(\sigma)-x_2(\sigma)|=\eps^\gamma, \text{ and
   }\forall t\in[\tau,\sigma],\quad|x_1(t)-x_2(t)|\ge\eps^\gamma.$$
   Besides,  inside $]\tau,\sigma[$, $x_1(t)-x_2(t)$ has a constant
   sign that we can suppose to be~$+$ (one argues similarly if it
   is~$-$). Under this assumption, we have 
$$\xi_1(\tau)-\xi_2(\tau)>0\;\;{\rm and}\;\;\xi_1(\sigma)-\xi_2(\sigma)<0.$$
Using the exponential control of $\lambda_j'(x_j(t))$ for
$j\in\{1,2\}$, we obtain 
\begin{equation}\label{Bebert}
\left(\xi_1(\tau)-\xi_2(\tau)\right)-
\left(\xi_1(\sigma)-\xi_2(\sigma)\right)\lesssim{\rm
  e}^{Ct}(\sigma-\tau).\end{equation}
 We write 
\begin{align}\label{Be}
  \xi_1(\tau)-\xi_2(\tau)=|\xi_1(\tau)-\xi_2(\tau)|&\ge
  \frac{\left||\xi_1(\tau)|^2- |\xi_2(\tau)|^2\right|}{|\xi_1(\tau)|+
    |\xi_2(\tau)|}\\ \nonumber
&\gtrsim
  e^{-Ct}\left||\xi_1(\tau)|^2-|\xi_2(\tau)|^2\right|
\end{align}
and 
\begin{equation}\label{bert}
-\xi_1(\sigma)+\xi_2(\sigma)=|\xi_1(\tau)-\xi_2(\tau)|\gtrsim {\rm
  e}^{-Ct} \left| |\xi_1(\sigma)|^2-|\xi_2(\sigma)|^2\right|. 
\end{equation}
As before, we prove 
\begin{equation*}
  \left||\xi_1(\tau)|^2- |\xi_2(\tau)|^2\right| + \left|
    |\xi_1(\sigma)|^2-|\xi_2(\sigma)|^2\right|\gtrsim  \Gamma , 
\end{equation*}
provided that $\eps^\gamma {\rm e}^{Ct}\ll 1$. 
 Therefore, plugging the latter equation, \eqref{Be} and~\eqref{bert}
 into~\eqref{Bebert}, we obtain 
$$\sigma-\tau\gtrsim e^{-2Ct}\Gamma\;\;{\rm thus } \;\;
N^\eps(t)\lesssim t e^{Ct} \Gamma^{-1}\lesssim {\rm
  e}^{Ct}\Gamma^{-1}$$ 
which completes the proof of Theorem~\ref{theo:superposition1}.
\end{proof}

\begin{remark}
  The proof shows that if the approximation of
  Theorem~\ref{theo:matrix} is proven to be valid on some time
  interval $[0,C\log(1/\eps)]$, then Theorem~\ref{theo:superposition1}
  will also be valid on a time interval of the same form. 
\end{remark}

\appendix
\section{Global existence of the exact solution}
\label{sec:appA}

The proof of Lemma~\ref{lem:existpsi} follows classical
arguments; see \cite{TsutsumiL2} (or \cite{CazCourant}) for more
details. We suppose $\eps=1$  
without loss of generality. We use the decomposition  
$V(x)=D(x)+W(x)$ of Assumption~\ref{assumption} and we denote by
$U(t)$ the unitary propagator of $-{1 \over 2}\d_x^2 +D(x)$.  
Let  $X_T$ be the set
$$X_T=\left\{\psi\in{\mathcal C}(I_T,\Sigma_1^1),\;\psi,x\psi,\nabla
  \psi\in L^8(I_T,L^4(\R,\C^N))\right\},\;\;I_T=]s-T,s+T[$$ 
for $s\in\R$ and $T\in\R$ to be fixed later.
The proof  consists in a fixed point argument for the function
$$\Phi_s: \psi\mapsto \Phi_s(\psi)$$
where for $s\in\R$, the function $\Phi_s(\psi)$ is defined by 
$$\Phi_s(\psi)(t)
=U(t-s)\psi(s)-i\Lambda\int_s^tU(t-\tau)\left(|\psi|^{2}_{\C^N}
  \psi\right)(\tau)d\tau-i\int_s^\tau
U(t-\tau)\left(W\psi(\tau)\right)d\tau.$$ 
By \cite{Fujiwara}, local in time Strichartz estimates are available for
$U$. 
Strichartz estimates and H\"older inequality imply that there exists a
constant $C>0$ such that 
\begin{align*}
  \|\Phi_s(\psi)\|_{L^8(I_T,L^4)\cap L^\infty(I_T,L^2)} & \le C
\|\psi(s)\|_{L^2} + C
\|\psi\|^{2}_{L^{8/3}(I_T,L^4)}\|\psi\|_{L^8(I_T,L^4)}\\
&\quad +
C\|W\psi\|_{L^1(I_T,L^2)}.
\end{align*}
Using the boundedness of the coefficients of $W$ and H\"older
inequality in time, we obtain  
$$\displaylines{\|\Phi_s(\psi)\|_{L^8(I_T,L^4)\cap L^\infty(I_T,L^2)}\le
  C\|\psi(s)\|_{L^2} +C\sqrt T \|\psi\|_{L^8(I_T,L^4)}^3+CT\|\psi\|_
  {L^\infty(I_T,L^2)}.\cr}$$ 
We can then infer that $\Phi_s$ is a contraction on a ball of $X_T$
for some  $T$ which  depends only on $\|\psi(s)\|_{L^2}$. Then, the
conservation of $\|\psi(t)\|_{L^2} $ yields the lemma.

\section{Some formulas involving the projectors}
\label{sec:formulas}

In this section, we list and prove some formulas which will be used in
the course of the computations in the next appendix. We consider here
the more general case $x\in \R^d$, with $d\ge 1$. Fix once and for all
in this paragraph $j\in
\{1,\dots,P\}$ and  $\ell \in \{1,\dots,d\}$.  First, recall
that we have seen in \S\ref{sec:eigenvector} that since $\Pi_j^2=\Pi_j$,
\begin{equation}
  \label{eq:plp}
  \Pi_j\(\d_\ell
\Pi_j\)\Pi_j =0. 
\end{equation}
Differentiating the relation $\Pi_j^2=\Pi_j$, we find: $\forall j\in
\{1,\dots,P\},\ \forall \ell \in \{1,\dots,d\}$,
\begin{equation}
  \label{eq:dPi}
   \d_\ell \Pi_j=
(\d_\ell \Pi_j)\Pi_j +  \Pi_j(\d_\ell \Pi_j).
\end{equation}
We now show: $\forall j\in
\{1,\dots,P\},\ \forall \ell \in \{1,\dots,d\}$, 
\begin{equation}\label{eq:nablaPi}
  \begin{aligned}
  \partial_{\ell}\Pi_j&=\sum_{k\not=j}\( \Pi_k
(\d_\ell \Pi_j)\Pi_j + \Pi_j 
(\d_\ell \Pi_j)\Pi_k \)\\
&= \sum_{1\le k\le P}\( \Pi_k (\d_\ell \Pi_j)\Pi_j + \Pi_j
(\d_\ell \Pi_j)\Pi_k \),
  \end{aligned}
\end{equation}
where the last equality stems from \eqref{eq:plp}. To prove
\eqref{eq:nablaPi}, simply write 
\begin{equation*}
\partial_{\ell}\Pi_j =\sum_{k,m}\Pi_k (\partial_\ell\Pi_j)\Pi_m\,  
\end{equation*}
where we have used $\sum_k\Pi_k={\rm Id}$. Then,  observe that $\Pi_k\Pi_j=\delta_{jk}\Pi_j$ yields
$$\Pi_k (\partial_\ell\Pi_j)+(\partial_\ell\Pi_k) \Pi_j\;\;{\rm whence}\;\; \Pi_k( \partial_\ell\Pi_j)=-(\partial_\ell \Pi_k)\Pi_j.$$
The fact that $\Pi_j\Pi_m=0$ for all $m\not=j$ gives~\eqref{eq:nablaPi}. 

\smallbreak

The last formulas we wish to establish involve the spectral gap. Since
we have a basis of eigenfunctions, we have
\begin{equation*}
  V\Pi_j = \Pi_j V = \l_j \Pi_j. 
\end{equation*}
Differentiating with respect to $x_\ell$, we infer
\begin{equation*}
  (\d_\ell \Pi_j) V + \Pi_j \d_\ell V = \l_j \d_\ell \Pi_j + (\d_\ell
  \l_j)\Pi_j. 
\end{equation*}
For $k\in \{1,\dots,P\}$, multiply this relation by $\Pi_k$ on the
right, and use the property $V\Pi_k=\l_k\Pi_k$:
\begin{equation*}
  \l_k (\d_\ell \Pi_j)\Pi_k + \Pi_j\(\d_\ell V-\d_\ell \l_j\)\Pi_k =
  \l_j (\d_\ell \Pi_j)\Pi_k, 
\end{equation*}
hence 
\begin{equation}\label{eq:dPietV}
  (\l_j-\l_k)(\d_\ell \Pi_j)\Pi_k = \Pi_j\(\d_\ell V-\d_\ell
  \l_j\)\Pi_k. 
\end{equation}
Similarly, we have
\begin{equation}
  \label{eq:7.3'}
  (\l_j-\l_k)\Pi_k (\d_\ell \Pi_j) = \Pi_k\(\d_\ell V-\d_\ell
  \l_j\)\Pi_j. 
\end{equation}

\section{About the growth  of the eigenvectors at infinity}
\label{sec:growthcond}

This section is devoted to the proof of estimates at infinity for the
eigenprojectors associated with a potential $V$ satisfying
Assumption~\ref{assumption}. We will use a lemma on the derivatives
of the inverse of the gap between two different eigenvalues. 
For $j,k\in\{1,\dots, P\}$, $j\not=k$, we recall that we have set
(see \eqref{def:gamma}) 
$$\forall x\in\R,\;\;\gamma_{j,k}(x)=(\lambda_j(x)-\lambda_k(x))^{-1}.$$
Since the results  are not specific to the space dimension one, we prove
them for potentials depending on $x\in\R^d$, $d\ge 1$.

\begin{lemma}\label{lem:gap}
Assume
 \eqref{gapcondition} is satisfied with $n_0\in\N$ and that the
 functions $V$ and $\lambda_j$  ($j\in\{1,\cdots,P\}$) are at most
 quadratic. Then, for $\beta\in\N^d$  and for 
 $j,k\in\{1,\dots, P\}$ with $j\not=k$, 
\begin{eqnarray}
\label{dgap}\left| \partial_x^\beta \gamma_{j,k}(x) \right| & \lesssim
& \<x\>^{n_0+|\beta|(1+n_0)}. 
\end{eqnarray}
\end{lemma}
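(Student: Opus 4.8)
The plan is to prove \eqref{dgap} by induction on $|\beta|$. The base case $\beta = 0$ is immediate: $|\gamma_{j,k}(x)| = |\lambda_j(x)-\lambda_k(x)|^{-1} \le c_0^{-1}\<x\>^{n_0}$ directly from the gap condition \eqref{gapcondition}. For the inductive step, I would differentiate the defining relation $\gamma_{j,k}(x)(\lambda_j(x)-\lambda_k(x)) = 1$. Writing $\delta_{j,k} = \lambda_j - \lambda_k$, the Leibniz rule applied to $\gamma_{j,k}\delta_{j,k} = 1$ gives, for $|\beta|\ge 1$,
\begin{equation*}
  \gamma_{j,k}\,\partial_x^\beta \delta_{j,k} + \sum_{0 < \alpha \le \beta} \binom{\beta}{\alpha} \(\partial_x^\alpha \gamma_{j,k}\) \(\partial_x^{\beta-\alpha}\delta_{j,k}\) = 0,
\end{equation*}
so that
\begin{equation*}
  \partial_x^\beta \gamma_{j,k} = -\gamma_{j,k}\left( \gamma_{j,k}\,\partial_x^\beta \delta_{j,k} + \sum_{0 < \alpha < \beta} \binom{\beta}{\alpha} \(\partial_x^\alpha \gamma_{j,k}\) \(\partial_x^{\beta-\alpha}\delta_{j,k}\)\right),
\end{equation*}
where I have isolated the $\alpha = \beta$ term (which involves $\gamma_{j,k}$, not a higher derivative) and moved it to the left.

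Now I would estimate each term on the right. Since $\lambda_j,\lambda_k$ are at most quadratic, $\partial_x^{\beta-\alpha}\delta_{j,k}$ is bounded by $\<x\>$ when $|\beta-\alpha| \le 1$ and by a constant when $|\beta-\alpha| \ge 2$; in all cases $|\partial_x^{\gamma}\delta_{j,k}| \lesssim \<x\>^{1}$ for $|\gamma| \ge 1$, and $\lesssim \<x\>^2$ for $\gamma=0$. Using the base estimate $|\gamma_{j,k}| \lesssim \<x\>^{n_0}$ and the induction hypothesis $|\partial_x^\alpha \gamma_{j,k}| \lesssim \<x\>^{n_0 + |\alpha|(1+n_0)}$ for $0 < |\alpha| < |\beta|$, a typical summand is bounded by
\begin{equation*}
  \<x\>^{n_0} \cdot \<x\>^{n_0 + |\alpha|(1+n_0)} \cdot \<x\>^{1} \lesssim \<x\>^{2n_0 + |\alpha|(1+n_0) + 1} \le \<x\>^{n_0 + |\beta|(1+n_0)},
\end{equation*}
the last inequality because $|\alpha| \le |\beta| - 1$ gives $|\alpha|(1+n_0) + n_0 + 1 = |\alpha|(1+n_0) + (1+n_0) + (n_0) \le |\beta|(1+n_0) + $ hmm, let me recheck: $2n_0 + |\alpha|(1+n_0) + 1 = n_0 + |\alpha|(1+n_0) + (n_0 + 1) \le n_0 + (|\beta|-1)(1+n_0) + (1+n_0) = n_0 + |\beta|(1+n_0)$. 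Good. The term with $\partial_x^\beta\delta_{j,k}$ is handled the same way (it is the case "$\alpha = 0$ inside," bounded by $\<x\>^{n_0}\cdot\<x\>^{n_0}\cdot\<x\> \le \<x\>^{n_0+|\beta|(1+n_0)}$ since $|\beta|\ge 1$). Summing over the finitely many multi-indices yields \eqref{dgap}.

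The argument is essentially a bookkeeping induction, so I do not anticipate a genuine obstacle; the only point requiring care is the exponent arithmetic — making sure that every product of the available bounds (one factor of $\<x\>^{n_0}$ from the outer $\gamma_{j,k}$, the inductive power from $\partial_x^\alpha\gamma_{j,k}$, and at most $\<x\>$ per derivative of $\delta_{j,k}$) really fits under $\<x\>^{n_0 + |\beta|(1+n_0)}$, which it does precisely because each derivative of the at-most-quadratic gap costs only one power of $\<x\>$ while the induction budget allows $1+n_0$ per order. A minor subtlety is that the base case provides $|\beta| = 0$ with exponent $n_0$ rather than $n_0 + 0\cdot(1+n_0) = n_0$, which is consistent, and that one should verify $\delta_{j,k}$ is indeed at most quadratic, which follows from the assumption that the eigenvalues $\lambda_j$ are at most quadratic.
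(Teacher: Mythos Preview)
Your argument is correct. The only cosmetic issue is the informal ``hmm, let me recheck'' aside, but the exponent arithmetic is right: for $0<\alpha<\beta$ one has $2n_0+|\alpha|(1+n_0)+1=n_0+(|\alpha|+1)(1+n_0)\le n_0+|\beta|(1+n_0)$, and the $\alpha=0$ term gives $2n_0+1\le n_0+|\beta|(1+n_0)$ since $|\beta|\ge1$.

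The paper's proof is the same idea packaged differently: instead of inducting via Leibniz on $\gamma_{j,k}\Lambda_{j,k}=1$, it writes down the full Fa\`a di Bruno expansion
\[
\partial_x^\beta\gamma_{j,k}=\sum_{\substack{\alpha_1+\cdots+\alpha_p=\beta\\|\alpha_\ell|\ge1,\ p\le|\beta|}}a_{\alpha_1,\dots,\alpha_p}\,\Lambda_{j,k}^{-1-p}\,\partial_x^{\alpha_1}\Lambda_{j,k}\cdots\partial_x^{\alpha_p}\Lambda_{j,k}
\]
and bounds each product by $\<x\>^{n_0(1+p)+p}\le\<x\>^{n_0+|\beta|(1+n_0)}$ in one stroke. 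Your inductive version avoids invoking Fa\`a di Bruno explicitly and is arguably more self-contained; the paper's version makes the structure of the bound (one factor of $\<x\>^{n_0}$ per inverse gap, one $\<x\>$ per derivative of $\Lambda_{j,k}$) immediately visible. Neither approach has any real advantage over the other here.
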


\begin{proof} 
For $\beta={\bf 1}_\ell$, we immediately obtain
$$\left|\partial_{\ell}\gamma_{j,k}(x)\right|=
\left|{\partial_{\ell}\left(\lambda_j(x)-\lambda_k(x)\right)\over
    \left(\lambda_j(x)-\lambda_k(x)\right)^2}\right|\lesssim
\<x\>^{1+2n_0},$$
from \eqref{gapcondition}, and the fact that $\l_j$ and $\l_k$ are at
most quadratic.

Set $\Lambda_{j,k}=\lambda_j-\lambda_k$: it is at most quadratic. Besides, 
 for $\beta\in\N^d$,  we have 
$$\partial^\beta_x \left(\gamma_{j,k}\right)=
\sum_{{\alpha_1+\cdots+\alpha_p=\beta}\atop{ |\alpha_\ell|\ge  1,\ p\le |\beta|}}
a_{\alpha_1,\dots,\alpha_p}\Lambda_{j,k}^{-1-p} \partial_x^{\alpha_1}
\Lambda_{j,k}\cdots \partial_x^{\alpha_p}\Lambda_{j,k}$$ 
for some real-numbers $a_{\alpha_1,\dots,\alpha_p}$.
The result then follows by observing that 
$$\left|\Lambda_{j,k}^{-1-p} \partial_x^{\alpha_1}
  \Lambda_{j,k}\cdots \partial_x^{\alpha_p}\Lambda_{j,k}\right|\lesssim
\<x\>^{n_0(1+p)}\<x\>^p,$$ 
from \eqref{gapcondition}, and the property $
|\partial_x^{\alpha}\Lambda_{j,k}|\lesssim \<x\>^{(2-|\alpha|)_+}$,
  which follows from the fact that $\Lambda_{j,k}$ is at most
  quadratic, in the sense of Definition~\ref{def:sousquad}. 
\end{proof}

We now consider the eigenprojectors $\Pi_j$ associated with the
eigenvalues $\lambda_j$ of the matrix $V$. Because of the gap
condition, these functions are smooth in $\R^d$. We prove the
following  

\begin{lemma} \label{lem:projectors}
Let $\Pi_j$ be an eigenprojector of $V$ for $j\in\{1,\dots, P\}$, we
have for $\beta\in\N^d$ 
\begin{eqnarray}
\label{dPi} 
| \partial_x^\beta \Pi|_{\C^{N,N}} & \lesssim &
 \<x\>^{|\beta|(1+n_0)},
\end{eqnarray}
where the norm $|\cdot|_{\C^{N,N}}$ denotes the matricial norm.
\end{lemma}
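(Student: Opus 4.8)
The plan is to prove \eqref{dPi} by induction on $|\beta|$, using the Riesz projection representation of $\Pi_j$ together with the resolvent bounds that follow from the gap condition~\eqref{gapcondition} and the at-most-quadratic nature of $V$. Concretely, since $\lambda_j$ is a simple eigenvalue of $V(x)$ separated from the other eigenvalues by at least $c_0\langle x\rangle^{-n_0}$, one can write
$$\Pi_j(x)=\frac{1}{2i\pi}\oint_{\Gamma(x)}\bigl(\zeta-V(x)\bigr)^{-1}\,d\zeta,$$
where $\Gamma(x)$ is a small circle around $\lambda_j(x)$ of radius comparable to $c_0\langle x\rangle^{-n_0}$, say half the distance to the nearest other eigenvalue. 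The key quantitative input is that on this contour the resolvent is bounded by $\|(\zeta-V(x))^{-1}\|\lesssim\langle x\rangle^{n_0}$ (the inverse of the gap), since $V$ is Hermitian and the distance from $\zeta$ to the spectrum is $\gtrsim\langle x\rangle^{-n_0}$.

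For the first derivative, differentiating the Riesz integral (the contour can be chosen locally constant, or one absorbs its motion — this is a routine point) gives
$$\partial_\ell\Pi_j(x)=\frac{1}{2i\pi}\oint_{\Gamma(x)}\bigl(\zeta-V(x)\bigr)^{-1}\bigl(\partial_\ell V(x)\bigr)\bigl(\zeta-V(x)\bigr)^{-1}\,d\zeta.$$
Estimating: two resolvent factors contribute $\langle x\rangle^{2n_0}$, the factor $\partial_\ell V$ is bounded (here the structure $V=D+W$ from Assumption~\ref{assumption} matters: $\partial_\ell D$ is bounded because $D$ has at-most-quadratic diagonal entries, wait — more carefully, $\partial_\ell V$ is at worst $O(\langle x\rangle)$ since $V$ is at most quadratic, but in fact for the diagonal part the off-diagonal structure means $\partial_\ell V$ enters only through commutators with $\Pi_j$; alternatively one simply uses $|\partial_\ell V|\lesssim\langle x\rangle$), and the contour length is $\lesssim\langle x\rangle^{-n_0}$. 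One should be slightly careful here: the naive count gives $\langle x\rangle^{2n_0}\cdot\langle x\rangle\cdot\langle x\rangle^{-n_0}=\langle x\rangle^{1+n_0}$, which is exactly the claimed bound. A cleaner route avoiding the $\langle x\rangle$ from $\partial_\ell V$ is to use formula~\eqref{eq:dPietV}, which gives $\Pi_k(\partial_\ell\Pi_j)\Pi_k$-type pieces controlled by $\gamma_{j,k}$ and Lemma~\ref{lem:gap}; combined with~\eqref{eq:nablaPi} this yields $|\partial_\ell\Pi_j|\lesssim\sup_k|\gamma_{j,k}|\cdot|\partial_\ell V|\lesssim\langle x\rangle^{n_0}\cdot\langle x\rangle^{1}$, but using that $\Pi_j\partial_\ell V\Pi_k$ for the off-diagonal $W$-part is bounded while the $D$-part is diagonal and commutes with the $\Pi$'s to leading order, one recovers $\langle x\rangle^{1+n_0}$.

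For the inductive step, I would differentiate the Riesz integral $|\beta|$ times (Leibniz), obtaining a sum of terms each consisting of $p+1$ resolvent factors $(\zeta-V)^{-1}$ interleaved with $p$ derivative factors $\partial^{\alpha_i}V$ with $\alpha_1+\dots+\alpha_p=\beta$, $|\alpha_i|\ge1$, $p\le|\beta|$. Since $V$ is at most quadratic, $|\partial^{\alpha_i}V|\lesssim\langle x\rangle^{(2-|\alpha_i|)_+}\le\langle x\rangle$; the $p+1$ resolvents give $\langle x\rangle^{(p+1)n_0}$; the contour length gives $\langle x\rangle^{-n_0}$. Counting: $\langle x\rangle^{(p+1)n_0-n_0}\langle x\rangle^{p}=\langle x\rangle^{pn_0+p}\le\langle x\rangle^{|\beta|(1+n_0)}$ since $p\le|\beta|$ — this closes the induction. (One could also mimic the structure of the proof of Lemma~\ref{lem:gap}, expanding $\partial^\beta\Pi_j$ as a sum over multi-index partitions and estimating each monomial.) The main obstacle is the bookkeeping of the moving contour $\Gamma(x)$ and making precise that differentiating through the integral is legitimate with the radius scaling like $\langle x\rangle^{-n_0}$; once one fixes $\Gamma$ to be, say, the boundary of a disk of radius $\tfrac{c_0}{3}\langle x\rangle^{-n_0}$ centered at $\lambda_j(x)$ and checks that nearby eigenvalues stay outside (using that $\lambda_j$ is also at most quadratic so $|\partial_\ell\lambda_j|\lesssim\langle x\rangle$ is dominated on a scale $\langle x\rangle^{-n_0}$ after localizing $x$), the resolvent estimate $\|(\zeta-V(x))^{-1}\|\lesssim\langle x\rangle^{n_0}$ holds uniformly on $\Gamma(x)$ and the rest is the routine Leibniz count above.
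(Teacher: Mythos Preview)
Your approach is correct and genuinely different from the paper's. You represent $\Pi_j$ by the Riesz contour integral and estimate derivatives by differentiating the resolvent under the integral; the paper never writes down the Riesz formula but works purely with algebraic projector identities. Concretely, the paper decomposes $\partial^\beta\Pi_j=\sum_{k,m}\Pi_k(\partial^\beta\Pi_j)\Pi_m$, handles the diagonal block $\Pi_j(\partial^\beta\Pi_j)\Pi_j$ by differentiating $\Pi_j^2=\Pi_j$ (which expresses it through lower-order products, controlled by induction), and for the off-diagonal blocks differentiates the identity $(\lambda_j-\lambda_k)(\partial_\ell\Pi_j)\Pi_k=\Pi_j(\partial_\ell V-\partial_\ell\lambda_j)\Pi_k$ and feeds in Lemma~\ref{lem:gap} on $\gamma_{j,k}$. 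Your resolvent count $\langle x\rangle^{(p+1)n_0}\cdot\langle x\rangle^{p}\cdot\langle x\rangle^{-n_0}=\langle x\rangle^{p(1+n_0)}\le\langle x\rangle^{|\beta|(1+n_0)}$ is correct, the key observation being that $V$ at most quadratic forces $|\partial^{\alpha}V|\lesssim\langle x\rangle$ for $|\alpha|=1$ and $\lesssim 1$ for $|\alpha|\ge 2$, so the product of the $p$ derivative factors is at most $\langle x\rangle^{p}$. Your route is more compact and packages the induction into the Fa\`a--di--Bruno expansion of $(\zeta-V)^{-1}$; the paper's route is entirely elementary, avoiding contour integrals and hence the (routine, as you note) issue of the $x$-dependent contour. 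Two cosmetic points: your treatment of the first derivative wanders (the paragraph starting ``A cleaner route'' is muddled and unnecessary, since your final paragraph already covers $|\beta|=1$), and $\lambda_j$ need not be simple --- it has multiplicity $d_j$ --- but the Riesz formula gives the full spectral projector regardless, so nothing changes.
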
 

\begin{proof}
The case $|\beta|=0$ is immediate since $\Pi_j$ is a projector.
In view of~\eqref{eq:nablaPi}, relations~\eqref{eq:7.3'}
and~\eqref{eq:dPietV} imply \eqref{dPi} for $|\beta|=1$.

\smallbreak

We now argue by induction. We suppose that (\ref{dPi}) holds for any
$\gamma\in\N^d$ with $|\gamma|=K$ for some $K\in\N$ and we consider
$\beta$ with $|\beta|=K+1$ and  $\beta_\ell\not=0$.
  Differentiation of order $\beta-{\bf 1}_\ell$ of \eqref{eq:dPi} 
   and multiplication on both sides by $\Pi_j$ yields
    $$\Pi_j(\partial_x^\beta\Pi_j)\Pi_j=\Pi_j\left(\sum_{0<|\alpha|<|\beta|}
      a_\alpha \partial^\alpha_x\Pi_j\partial_x^{\beta-\alpha}\Pi_j\right)\Pi_j,$$ 
   where all along this proof, $a_\alpha$ will denote real numbers
   whose exact value is unimportant. We obtain 
\begin{equation}\label{1}
\left|\Pi_j(\partial_x^\beta\Pi_j)\Pi_j\right|_{\C^{N,N}}\lesssim
\<x\>^{|\beta|(1+n_0)}. 
\end{equation}
Then, for all $k\not=j$, we estimate
  $(\partial_x^\beta \Pi_j) \Pi_k $. To do so, we differentiate
  \eqref{eq:dPietV} and get  
  \begin{align*}
    (\partial^\beta_x \Pi_j )\Pi_k &= \sum _{0<|\alpha|<|\beta|}
   a_\alpha \partial_x^\alpha \Pi_j \partial_x ^{\beta-\alpha}\Pi_k \\
&+ \sum_{\alpha_1+\cdots+\alpha_4=\beta-{\bf 1}_\ell}
b_{\alpha_1,\dots,\alpha_4}\partial_x^{\alpha_1}\left((\lambda_j-\lambda_k)^{-1}
\right)\partial_x^{\alpha_2}\Pi_j \partial_x^{\alpha_3}\partial_{\ell}
\left(V-\lambda_j\right)  
\partial_x^{\alpha_4}\Pi_k.
  \end{align*}
In the first sum, the induction assumption yields
\begin{equation}\label{4}
\left\| \partial_x^\alpha \Pi_j \partial_x^{\beta-\alpha}\Pi_k
\right\|_{\C^{N,N}}\lesssim 
\<x\>^{|\alpha|(1+n_0)+|\beta-\alpha|(1+n_0)}=\<x\>^{|\beta|(1+n_0)}
\end{equation}
 Besides, 
for  each  term in the second sum, we write 
$$\displaylines{\qquad \left\| \partial_x^{\alpha_1}\left((\lambda_j-\lambda_k)^{-1}\right)\partial_x^{\alpha_2}\Pi_j \partial_x^{\alpha_3}\partial_{x_i}\left(V-\lambda_j\right)
\partial_x^{\alpha_4}\Pi_k\right\|_{\C^{N,N}} \hfill\cr\hfill\lesssim \<x\>^{n_0+|\alpha_1|(1+n_0)} \<x\>^{(1-|\alpha_3|)_+}\<x\>^{(1+n_0)(|\alpha_2|+|\alpha_4|)}
\qquad\cr}$$ 
where $r_+=\max(r,0)$ and where we have used the fact that
$V$ and $\lambda_j$ are at most quadratic, together with the induction
assumption and 
Lemma~\ref{lem:gap}. We have the two alternatives: 
\begin{itemize}
\item If $\alpha_3=0$, then 
  \begin{align*}
    n_0+|\alpha_1|(1+n_0)+(1-|\alpha_3|)_+&+(1+n_0)(|\alpha_2|+|\alpha_4|) \\
&= n_0+|\alpha_1|(1+n_0)+1+(1+n_0)(|\alpha_2|+|\alpha_4|)\\
&=(1+n_0)(1+|\alpha_1|+|\alpha_2|+|\alpha_3|)=(1+n_0)|\beta|,
  \end{align*}
since $\alpha_1+\alpha_2+\alpha_4=\beta-{\bf 1}_\ell$.
\item If $\alpha_3\not=0$, then 
  \begin{align*}
    n_0+|\alpha_1|(1+n_0)+(1-|\alpha_3|)_+&+(1+n_0)(|\alpha_2|+|\alpha_4|)\\
& =  n_0+(1+n_0)(|\alpha_1|+|\alpha_2|+|\alpha_4|)\\
&\le (1+n_0)(1+|\alpha_1|+|\alpha_2|+|\alpha_4|)\\
&\le(1+n_0)|\beta|.
  \end{align*}
\end{itemize}
We deduce 
 \begin{equation}\label{5}
 \left\|(\partial_x^\beta\Pi_j)\Pi_k\right\|_{\C^{N,N}}\lesssim 
\<x\>^{|\beta|(1+n_0)},\quad \forall k\not =j.
 \end{equation}
Similarly,
\begin{equation}
  \label{eq:10h22}
   \left\|\Pi_k(\partial_x^\beta\Pi_j)\right\|_{\C^{N,N}}\lesssim 
\<x\>^{|\beta|(1+n_0)},\quad \forall k\not =j.
\end{equation}
In view of \eqref{1}, we infer
\begin{equation}\label{eq:10h26}
  \left\|\Pi_j(\partial_x^\beta\Pi_j)\Pi_k\right\|_{\C^{N,N}} + 
\left\|\Pi_k(\partial_x^\beta\Pi_j)\Pi_j\right\|_{\C^{N,N}}\lesssim 
\<x\>^{|\beta|(1+n_0)},\quad \forall j,k.
\end{equation}
Applying the operator $\d^{\beta-{\bf 1}_\ell}_x$ to
\eqref{eq:nablaPi},  the induction assumption and equations \eqref{5}, \eqref{eq:10h22} and \eqref{eq:10h26}
yield \eqref{dPi}, which concludes the induction. 
\end{proof}

\bibliographystyle{amsplain}
\bibliography{biblio}

\end{document}